\def\margin{2cm}
\newtheorem{definition}{Definition}
\newtheorem{theorem}{Theorem}
\newtheorem{lemma}[theorem]{Lemma}
\newtheorem{observation}[theorem]{Observation}
\theoremstyle{remark}
\title{Product Dimension of Forests and\\ Bounded Treewidth Graphs}
\author[1]{L. Sunil Chandran}
\author[2]{Rogers Mathew}
\author[1]{Deepak Rajendraprasad}
\author[1]{Roohani Sharma}
\affil[1]
{
    Department of Computer Science and Automation, \authorcr
    Indian Institute of Science, \authorcr
    Bangalore, India, 560012. \authorcr
    \{sunil, deepakr\}@csa.iisc.ernet.in, \authorcr 
	roohani.sharma90@gmail.com
}
\affil[2]
{
    Department of Mathematics and Statistics, \authorcr
	Dalhousie University, Halifax, Canada - B3H 3J5 \authorcr
    rogersm@mathstat.dal.ca
}
\date{}
\begin{document}
\maketitle

\begin{abstract}
The {\em product dimension} of a graph $G$ is defined as the minimum natural number $l$ such that $G$ is an induced subgraph of a direct product of $l$ complete graphs. In this paper we study the product dimension of forests, bounded treewidth graphs and $k$-degenerate graphs. We show that every forest on $n$ vertices has a product dimension at most $1.441 \log n + 3$. This improves the best known upper bound of $3 \log n$ for the same due to Poljak and Pultr. The technique used in arriving at the above bound is extended and combined with a result on existence of orthogonal Latin squares to show that every graph on $n$ vertices with a treewidth at most $t$ has a product dimension at most $(t+2)(\log n + 1)$. We also show that every $k$-degenerate graph on $n$ vertices has a product dimension at most $\lceil 8.317 k \log n \rceil + 1$. This improves the upper bound of  $32 k \log n$ for the same by Eaton and  R{\"o}dl.

\end{abstract}

{\noindent \bf Keywords:} 
product dimension, representation number, forest, bounded treewidth graph, $k$-degenerate graph, orthogonal Latin squares.

\section{Introduction}

For a graph $G(V,E)$ and an $l \in \mathbb{N}$, a function $\phi_{G} : V \rightarrow \mathbb{N}^{l}$ is called an {\em $l$-encoding} of $G$ if 
\begin{enumerate}
 \item $\phi_{G}$ is an injection, and
 \item $\forall u, v \in V, \{u,v\} \in E$ iff $\phi_{G}(u)$ and $\phi_{G}(v)$ differ in all $l$ coordinates.
\end{enumerate}
The minimum $l$ such that an $l$-encoding of $G$ exists is called the \emph{product dimension} of $G$ and is denoted by $pdim(G)$. Some authors refer to it as the {\em Prague dimension} \cite{furedi2000prague}.

The product dimension of a graph $G$ was first defined in \cite{nesetril1978simple} by Ne\v{s}et\v{r}il and R\"{o}dl as the minimum $l$ such that $G$ is an induced subgraph of a direct product (see Section \ref{notDef}) of $l$ complete graphs. It is easy to see that the two definitions of product dimension are equivalent. Another equivalent definition of the product dimension of a graph is the minimum number of proper colorings of $G$ such that any pair of non-adjacent vertices get the same color in at least one of the colorings and not in all of them.
 
The concept of product dimension of a graph was first used to prove the Galvin-Ramsey property of the class of all finite graphs \cite{nesetril1978simple}. Thereafter, this area was separately explored by various people. 

In 1980, Lov\'{a}sz, Ne\v{s}et\v{r}il and Pultr showed that the product dimension of a path on $n+1$ vertices (length $n$) is $\lceil \log n \rceil$ \cite{lovasz1980product}. They also gave a lower bound for the product dimension of a graph (Theorem 5.3 \cite{lovasz1980product}) which in particular tells that the product dimension of a tree on $n$ vertices with $l$ leaves is at least $\log (n - l + 1)$. The authors also suggested that the idea used to encode paths could be extended to study the product dimension of trees. Immediately after this paper, Poljak and Pultr in \cite{poljak1981dimension} came up with bounds on product dimension of trees using the encoding for paths as given in \cite{lovasz1980product}. The results in this paper are $pdim(T) \leq 3 \lceil \log |T| \rceil $ and $\log |m(T)| -1 \leq pdim(T) \leq 3 \lceil \log |m(T)| \rceil + 1$  where, $T$ is a forest and $m(T)$ is the graph obtained from $T$ by recursively deleting a leaf vertex with one or more siblings. In this paper we improve the above upper bound to $1.441 \log |T| + 3$. More recently, in 2010, Ida Kantor in her doctoral thesis \cite{kantor10} determines another upper bound on the product dimension of trees viz. $2 + \lceil \log \delta_{r} \rceil + \sum_{i \in S, 2 \leq i < r} \lceil \log \delta_{i} \rceil + \sum_{i \not \in S, 3 \leq i< r} \lceil \log (\delta_{i}-1) \rceil$, where  $r$ is the radius of the tree, $x$ is a central vertex,  $\delta_{i}$ is the maximum degree among all vertices which are at a distance $r-i$ from $x$ and $S= \{2^{i} : i \in \mathbb{N}\}$. The technique used is a generalization of the technique used by Lov\'{a}sz, Ne\v{s}et\v{r}il and Pultr in \cite{lovasz1980product} for paths.

The product dimension of graphs obtained by amalgamation of smaller graphs was studied in \cite{alles1986dimension}. The idea of using orthogonal Latin squares to encode a disjoint union of complete graphs is given by Evans, Isaak and Narayan in 
\cite{evans2000representations}. This idea is the motivation for our \emph{Amalgamation Lemma for General Graphs} (Lemma \ref{amalgamDegen}) which is a key ingredient for showing that the product dimension of a graph on $n$ vertices with treewidth at most $t$ is at most $(t+2)(\log n +1)$. Orthogonal Latin squares have been known for a long time. In the 1780s Euler demonstrated methods for constructing orthogonal Latin squares of order $t$ where $t$ is odd or a multiple of $4$ and later conjectured that orthogonal Latin squares of order $t \equiv 2$ mod $4$ do not exist. In 1960, Parker, Bose, and Shrikhande in \cite{bose1960further} disproved Euler's conjecture for all $t \geq 10$. Thus, orthogonal Latin squares exist for all orders $t \geq 3$ except $t = 6$. We use this result to prove Lemma \ref{amalgamDegen}.

A parameter closely related to product dimension of a graph $G$ is the equivalence number of the complement of the graph $G$, $\bar{G}$. An \emph{equivalence} is a vertex disjoint union of cliques and the \emph{equivalence number} of a graph $H$ is the minimum number of equivalences required to cover the edges of $H$. In \cite{alon1986covering}, Alon came up with bounds on the equivalence number of a graph showing $\log n - \log d \leq eq(\bar{G}) \leq 2 e^{2} (d+1)^{2} \ln n$, where $G$ is a graph on $n$ vertices with maximum degree $d$.  It is easy to see that $pdim(G) \leq eq(\bar{G}) + 1$ (\cite{cooper10}). Eaton and R\"{o}dl in \cite{eaton1996graphs} proved that $pdim(G) \leq 32 k \log n$ for a $k$-degenerate graph $G$ on $n$ vertices. Since degeneracy of a graph is at most its maximum degree, this result is a significant improvement over Alon's result. We use a probabilistic method to further improve this upper bound to $\lfloor 8.317 k \log n \rfloor + 2$.  

The product dimension of a graph is closely related to the representation number of a graph - a concept introduced by Erd\"{o}s in \cite{erdos1989representations}. A graph $G$ is representable modulo $r$ if there exists an injection $f : V(G) \rightarrow \{0, \ldots, r-1\}$ such that for all $u,v \in V(G)$, $gcd(f(u),f(v))=1$ if and only if $\{u,v\} \in E(G)$. The minimum $r$ modulo which $G$ is representable is called the representation number of $G$. The relationship between the two concepts viz. the product dimension of a graph and representation number of a graph is described in \cite{evans2007representations}.

\subsection{Summary of Results}

\begin{enumerate}
\item For any forest $T$ on $n$ vertices, $pdim(T) \leq 1.441 \log n + 3$ (Theorem \ref{prodDimTrees}).
\item[] This is an improvement over the upper bound for product dimension of trees and forests given by Poljak and Pultr in \cite{poljak1981dimension} viz. $3 \lceil \log n \rceil$. We use a technique of divide and conquer to prove the theorem. The divide operation corresponds to the operation described in our \emph{Splitting Lemma for Forests} (Lemma \ref{lem1}) while the conquer operation corresponds to our \emph{Amalgamation Lemma for Bipartite Graphs} (Lemma \ref{lem2}).

\item For any graph $G$ on $n$ vertices and treewidth $t$, $pdim(G) \leq (t+2)(\log n +1)$ (Theorem \ref{thmTreeWidth}). 
\item[] The techniques used to prove Theorem \ref{prodDimTrees} for trees inspired us to work for graphs with bounded treewidth. Another key ingredient in proving this theorem is the \emph{Amalgamation Lemma for General Graphs} (Lemma \ref{amalgamDegen}) which is based on the existence of orthogonal Latin squares of different orders. Since treewidth $t$ graphs are $t$-degenerate (Section 4.2, \cite{koster02}), it follows from an upper bound on product dimension based on degeneracy of a graph \cite{eaton1996graphs} that $pdim(G) \leq 32 t \log n$. Our result is an improvement over that. 

\item For every $k$-degenerate graph $G$ on $n$ vertices, $pdim(G) \leq \lceil 8.317 k \log n \rceil + 1$ (Theorem \ref{thmDegenerate}).
\item[] We derive this result as an improvement over Eaton's and R\"{o}dl's upper bound of $32 k \log n$ for product dimension of $k$-degenerate graphs \cite{eaton1996graphs}. We use a probabilistic argument to prove the theorem and we believe that our proof is shorter.
\end{enumerate}

\subsection{Notations and Definitions} \label{notDef}
In this paper we consider only undirected, simple, finite graphs. For any graph $G$, $V(G)$ denotes its vertex set and $E(G)$ denotes its edge set. The \emph{cardinality} of a set $S$ is denoted by $|S|$. For a graph $G$, $|G|$ denotes the cardinality of $V(G)$. $N_{G}(u)$ denotes the open neighborhood of vertex $u$ in $G$, i.e. all the vertices adjacent to $u$ in $G$. The \emph{degree} of a vertex $u$, denoted by $d(u)$ is $|N(u)|$. 


For a graph $G$, the \emph{graph induced by a set $X \subset V(G)$}, denoted by $G[X]$, is the graph with $V(G[X]) = X$ and $ E(G[X]) = E(G) \cap \{\{v, v^{'}\}: v, v^{'} \in X\}$.

If $G_{1}$ and $G_{2}$ are two graphs, then $G_{1} \setminus G_{2}$ is the graph $G_{1}[V(G_{1}) \setminus V(G_{2})]$. If $G$ is a graph and $S \subset  V(G)$, then $G \setminus S$ is the graph $G[V(G) \setminus S]$. The \emph{union} of two graphs $G_{1}$ and $G_{2}$, denoted by $G_{1} \cup G_{2}$, is the graph with $V(G_{1} \cup G_{2})=V(G_{1}) \cup V(G_{2})$ and $E(G_{1} \cup G_{2})=E(G_{1}) \cup E(G_{2})$. Moreover, if $V(G_{1}) \cap V(G_{2}) = \phi $, then we call it a \emph{disjoint union} and denote it as $G_{1} \uplus G_{2}$. The \emph{intersection} of two graphs $G_{1}$ and $G_{2}$ is the graph $G_{1} \cap G_{2}$ with $V(G_{1} \cap G_{2}) = V(G_{1}) \cap V(G_{2})$ and $E(G_{1} \cap G_{2}) = E(G_{1}) \cap E(G_{2})$. 

The graph $G_{1} \times G_{2}$ is the \emph{direct product} of two graphs $G_{1}$ and $G_{2}$ with $V(G_{1} \times G_{2})=V(G_{1})   \times  V(G_{2})$ and $E(G_{1}   \times  G_{2})= \{ \{u,v\} : u, v \in V(G_{1})  \times  V(G_{2})$ and if  $u=(x_{1}, x_{2})$, $v=(y_{1},y_{2})$, then $(x_{1},y_{1}) \in E(G_{1})$ and $(x_{2} ,y_{2}) \in E(G_{2}) \}$. 

Let $[n]$ denote the set $\{1, \ldots , n\}$. The set of all natural numbers is denoted by $\mathbb{N}$. $\{a\}^{k}$ denotes the $k$-tuple $(a, \ldots , a)$. Throughout the paper, $\log n$ denotes $\log _{2} n$ and $\ln n$ denotes $\log _{e} n$.

\section{Product Dimension of Forests}
\begin{definition} \label{splittingDefn}
 In a forest $T$ on $n$ vertices, a vertex $v$ is called 
\begin{enumerate}
 \item an \emph{($\epsilon$,2)-split vertex} if $T \setminus \{v\} = T_{1} \uplus T_{2}$ such that $|T_{1}|, |T_{2}| \leq (\frac{1}{2} + \epsilon)n$, and
 \item an \emph{($\epsilon$,3)-split vertex} if $T \setminus \{v\} = T_{1} \uplus T_{2} \uplus T_{3}$ such that $|T_{1}|, |T_{2}|, |T_{3}| \leq (\frac{1}{2} - \epsilon)n$,
\end{enumerate}    
where $T_{1}, T_{2}$ and $T_{3}$ are subgraphs of $T$.
\end{definition}

\begin{lemma} [Splitting Lemma for Forests]\label{lem1}
In every forest $T$, for every $\epsilon \geq 0$, there exists either an \emph{($\epsilon$,2)-split vertex} or an \emph{($\epsilon$,3)-split vertex}.
\end{lemma}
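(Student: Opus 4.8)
The plan is to reduce the statement to the case of a single tree, apply the classical centroid theorem, and then pack the resulting pieces into two or three balanced buckets. First I would dispose of the forest case: given a forest $T$ on $n$ vertices with components $C_1,\dots,C_m$, add $m-1$ edges joining the components into a tree $T'$ on the same vertex set. Since $E(T)\subseteq E(T')$, any partition of $V(T')\setminus\{v\}$ into parts having no $T'$-edge between distinct parts also has no $T$-edge between distinct parts; hence an $(\epsilon,2)$- or $(\epsilon,3)$-split vertex of $T'$ is one of exactly the same kind for $T$, with the same parts and sizes. So it suffices to prove the lemma for trees, and I would assume $T$ is a tree henceforth.

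Next I would pick a \emph{centroid} $v$ of $T$, i.e.\ a vertex such that every component of $T\setminus\{v\}$ has at most $n/2$ vertices; such a $v$ exists (choose $v$ minimizing the size of the largest component of $T\setminus\{v\}$, and observe that, because $T$ is acyclic, $v$ has a unique neighbour inside any such largest component, and moving to that neighbour strictly decreases the largest-component size, a contradiction). Let $D_1,\dots,D_k$ be the components of $T\setminus\{v\}$, labelled so that $|D_1|\ge\cdots\ge|D_k|$; then $\sum_i |D_i|=n-1$ and each $|D_i|\le n/2$. If $k\le 2$, the at most two components already have size at most $n/2\le(\tfrac12+\epsilon)n$, so taking $T_1=D_1$ and $T_2=D_2$ (the latter empty if $k\le 1$) exhibits $v$ as an $(\epsilon,2)$-split vertex.

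The substantive case is $k\ge 3$, and here I would distribute the components greedily among three buckets $\mathcal G_1,\mathcal G_2,\mathcal G_3$: process $D_1,\dots,D_k$ in non-increasing order of size, each time adding the current component to a bucket of least total size. Writing $S_j$ for the total size of $\mathcal G_j$ and assuming $S_1\ge S_2\ge S_3$, the key claim is $S_1\le n/2$: if $\mathcal G_1$ ends up holding a single component this is the centroid bound, and otherwise, letting $D_j$ be the last component added to $\mathcal G_1$ (so $|D_j|\le S_1/2$, since $\mathcal G_1$ then held at least two components and $D_j$ was a smallest one), the greedy rule forces every bucket to have had total at least $S_1-|D_j|$ at that moment, whence $3(S_1-|D_j|)\le (n-1)-|D_j|$ and therefore $S_1\le (n-1)/2$. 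Setting $T_j$ to be the subgraph of $T$ induced by the vertices of the components in $\mathcal G_j$, distinct $D_i$'s are non-adjacent in $T\setminus\{v\}$, so $T\setminus\{v\}=T_1\uplus T_2\uplus T_3$. If $S_1\le(\tfrac12-\epsilon)n$, all three sizes meet the bound and $v$ is an $(\epsilon,3)$-split vertex; if instead $S_1>(\tfrac12-\epsilon)n$, I would merge the two smaller buckets, noting $|T_1|=S_1\le n/2\le(\tfrac12+\epsilon)n$ and $|T_2\cup T_3|=(n-1)-S_1<(\tfrac12+\epsilon)n$, so $v$ is an $(\epsilon,2)$-split vertex.

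The routine parts are the forest-to-tree reduction and the existence of a centroid; the heart of the argument is the $k\ge 3$ case, and in particular the bound $S_1\le n/2$ on the largest greedy bucket — this is exactly what lets a candidate $(\epsilon,3)$-split that is too unbalanced be rescued as an $(\epsilon,2)$-split. The place to be careful is the greedy analysis (tie-breaking, and checking that for $k\ge 3$ all three buckets are nonempty, so the merged parts genuinely form a valid split).
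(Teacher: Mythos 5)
Your proof is correct, and it arrives at the lemma by a route that is similar in outline to the paper's but differs in two interesting ways. The paper works directly with the forest: it chooses $v$ to minimize the size $|C_1(v)|$ of the largest component of $T\setminus\{v\}$ and proves $|C_1(v)|\leq(\tfrac12+\epsilon)n$ by a local-improvement argument (moving to a neighbour $w$ of $v$ inside $C_1(v)$ either strictly shrinks the largest component or makes the new largest component a subset of $T\setminus C_1(v)$, which has fewer than $(\tfrac12-\epsilon)n$ vertices). That step implicitly relies on $v$ lying in the same component of the forest as $C_1(v)$ (otherwise $C_1(v)\cap N_T(v)$ could be empty); your tree reduction sidesteps this concern cleanly and lets you quote the classical centroid bound $|C_1(v)|\leq n/2$ directly. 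The second difference is the packing step: where you use a greedy three-bucket load-balancing argument with the key bound $S_1\leq n/2$ on the heaviest bucket, the paper first checks whether $|C_1(v)|>(\tfrac12-\epsilon)n$ and if so immediately pairs $C_1(v)$ against the rest for a $2$-split; otherwise all components have size $\leq(\tfrac12-\epsilon)n$ and the paper considers a partition of the components into the minimum number $k$ of groups each of size $\leq(\tfrac12-\epsilon)n$: for $k\leq 3$ this is directly a split, and for $k\geq 4$ the two smallest groups together have size $\leq n/2$ by pigeonhole and size $>(\tfrac12-\epsilon)n$ by minimality of $k$, yielding a $2$-split. Your greedy variant is more uniform (it never cases on whether the single largest component is already over $(\tfrac12-\epsilon)n$) and reuses off-the-shelf facts, whereas the paper's is self-contained and somewhat shorter in notation. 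Both arguments are sound and yield the same statement.
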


\begin{proof}
Let $n=|T|$. For any $v \in V(T)$, let $C_{1}(v), \ldots ,$ $C_{m}(v)$ denote the (connected) components of $T \setminus \{v\}$ such that $|C_{1}(v)| \geq \cdots \geq |C_{m}(v)|$.

Let us choose $v \in V(T)$ such that $|C_{1}(v)|={\min}\{|C_{1}(u)| : u \in V(T)\}$. First we claim that $|C_{1}(v)| \leq (\frac{1}{2} + \epsilon)n$. For the sake of contradiction, let us assume that $|C_{1}(v)| > (\frac{1}{2} + \epsilon)n$. Let $w \in C_{1}(v) \cap N_{T}(v)$. If $C_{1}(w) \subset C_{1}(v)$, then $|C_{1}(w)|<|C_{1}(v)|$ (because $C_{1}(w) \subset C_{1}(v) \setminus \{v\}$) contradicting the choice of $v$. Hence, $C_{1}(w) \subset T \setminus C_{1}(v)$ and $|C_{1}(w)| \leq n-|C_{1}(v)| < (\frac{1}{2} - \epsilon)n < |C_{1}(v)|$. This again contradicts the choice of $v$.

If $|C_{1}(v)| > (\frac{1}{2} - \epsilon)n$, then $v$ is an \emph{($\epsilon$,2)-split vertex} and $T_{1}=C_{1}(v)$, $T_{2}= T \setminus (T_{1} \cup \{v\})$. Otherwise, let $t=m$ and $F_{1}=C_{1}(v), \ldots , F_{t}=C_{t}(v)$. Hence, $|F_{i}| \leq (\frac{1}{2}- \epsilon)n$ for all $ i \in [t]$. It is easy to see that if $ t \leq 3$, then $v$ is either an \emph{($\epsilon$,3)-split vertex} or an \emph{($\epsilon$,2)-split vertex} with $T_{i}=F_{i}$. If $t \geq 4$, consider a partition $I_{1} \uplus \ldots \uplus I_{k}=[t]$ with minimum possible $k$ such that $|\cup_{j \in I_{l}}F_{j}| \leq (\frac{1}{2} - \epsilon)n$ for all $l \in [k]$. For $k \leq 3$, $v$ is either an \emph{($\epsilon$,2)-split vertex} or an \emph{($\epsilon$,3)-split vertex} with $T_{i}=F_{i}$. Suppose $k \geq 4$, define $F_{l}^{'}=\cup_{j \in I_{l}} F_{j}$, $l \in [k]$ and let $F^{'}$ be the union of smallest two among $\{F_{1}^{'}, \ldots , F_{k}^{'}\}$. Hence, $|F^{'}| \leq \frac{n}{2} \leq (\frac{1}{2} + \epsilon)n$ by the pigeonhole principle. By the minimality in the choice of the partition $I_{1} \uplus \ldots \uplus I_{k}$, $|F^{'}| > (\frac{1}{2} - \epsilon)n$. Thus, $v$ is an \emph{($\epsilon$,2)-split vertex} with $T_{1}=F^{'}$ and $T_{2}=T \setminus (F^{'} \cup\{ v\})$. 
\end{proof}

\begin{definition}
We call an $l$-encoding $\phi _{G}$ of a graph $G$, a \emph{well-begun $l$-encoding} if the first coordinate of $\phi_{G}$ is from $\{0, \ldots, \chi(G)-1\}$.
\end{definition}

\begin{observation} \label{extensionOfEncoding}
For any $q > p$, if $\phi_{G}$ is a $p$-encoding of $G$, then $\psi_{G}$, obtained from $\phi_{G}$ by adding $q-p$ coordinates to $\phi_{G}$ such that for all $p < i \leq q$, the $i$-th coordinate of $\psi_{G}(x)$ is the $p$-th coordinate of $\phi_{G}$, is a $q$-encoding of $G$.
\end{observation}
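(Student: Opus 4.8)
The plan is to verify directly that the map $\psi_{G}\colon V(G)\to\mathbb{N}^{q}$ described in the statement satisfies the two defining conditions of a $q$-encoding. Write $\phi_{G}(x)=(\phi_{G}(x)_{1},\dots,\phi_{G}(x)_{p})$, so that by construction $\psi_{G}(x)_{i}=\phi_{G}(x)_{i}$ for $1\le i\le p$ and $\psi_{G}(x)_{i}=\phi_{G}(x)_{p}$ for $p<i\le q$; in other words the value $\phi_{G}(x)_{p}$ is repeated in the last $q-p+1$ coordinates of $\psi_{G}(x)$.

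First I would check injectivity of $\psi_{G}$. If $\psi_{G}(u)=\psi_{G}(v)$ then in particular their first $p$ coordinates agree, i.e. $\phi_{G}(u)=\phi_{G}(v)$, so $u=v$ since $\phi_{G}$ is an injection; hence $\psi_{G}$ is an injection. Next I would prove the biconditional on edges. For the forward direction, suppose $\{u,v\}\in E(G)$. Then $\phi_{G}(u)$ and $\phi_{G}(v)$ differ in all $p$ coordinates, so $\psi_{G}(u)$ and $\psi_{G}(v)$ differ in coordinates $1,\dots,p$; moreover, since $\phi_{G}(u)_{p}\neq\phi_{G}(v)_{p}$ and this is precisely the value sitting in every one of the remaining coordinates $p+1,\dots,q$, they differ there too. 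Thus $\psi_{G}(u)$ and $\psi_{G}(v)$ differ in all $q$ coordinates. For the converse, suppose $\psi_{G}(u)$ and $\psi_{G}(v)$ differ in all $q$ coordinates; in particular they differ in coordinates $1,\dots,p$, which are exactly the coordinates of $\phi_{G}$, so $\phi_{G}(u)$ and $\phi_{G}(v)$ differ in all $p$ coordinates, whence $\{u,v\}\in E(G)$ because $\phi_{G}$ is a $p$-encoding.

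There is essentially no obstacle here; the statement is a bookkeeping observation. The only point worth being careful about is that the newly added coordinates must copy an \emph{existing} coordinate of $\phi_{G}$ (here the $p$-th) rather than carry a fresh fixed value: a constant added coordinate would never distinguish any pair of vertices and so could not certify any edge, whereas a copied coordinate automatically separates every pair that $\phi_{G}$ already separates in that position, which includes every adjacent pair, while non-adjacency remains certified by one of the first $p$ coordinates regardless.
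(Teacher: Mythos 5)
Your proof is correct. The paper states this as an Observation without supplying a proof (treating it as immediate from the definition), and your direct verification — injectivity from the first $p$ coordinates, the forward direction by noting that adjacency forces a difference in the $p$-th coordinate which propagates to all the copied coordinates, and the converse by restriction to the first $p$ coordinates — is exactly the argument the authors leave implicit. Your closing remark about why the new coordinates must duplicate an existing one (rather than carry a constant) correctly identifies the one place where a careless version of this construction would break.
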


\begin{lemma}[Amalgamation Lemma for Bipartite Graphs]\label{lem2}
Let $G_{0},\ldots , G_{k-1}$ be bipartite graphs such that $G_{i} \cap G_{j} = \{g\}$ for all $i, j \in \{0, \ldots, k-1\}$, $i \neq j$. Let $G= \cup_{i=0}^{k-1} G_{i}$. For every $i \in \{0, \ldots , k-1\}$, let $\phi_{G_{i}}$ be a \emph{well-begun} $l_{i}$-encoding of $G_{i}$. Then we can construct a well-begun $l$-encoding $\phi_{G}$ of $G$, where $l=max_{0 \leq i \leq k-1}\{l_{i}\} + \lceil \log k \rceil$.

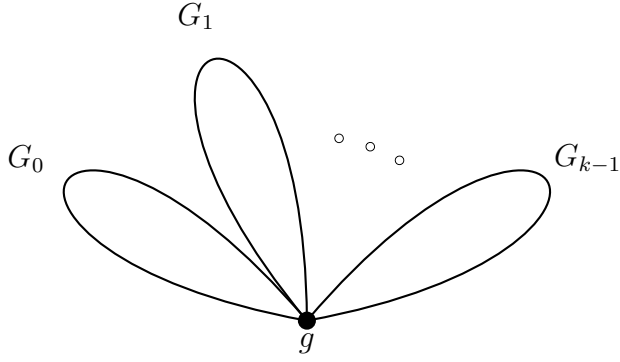
\begin{figure}[ht]
 \begin{center}

\psset{xunit=0.1\textwidth}
\psset{yunit=0.07\textwidth}
\begin{pspicture}(2,2)(8,7)

	\rput{ 60}(5,3){\psbezier(0,0)(-1,4)(1,4)(0,0) \rput[u]{*0}(0,3.5){$G_0$}}
	\rput{ 20}(5,3){\psbezier(0,0)(-1,4)(1,4)(0,0) \rput[u]{*0}(0,3.5){$G_1$}}
	\rput{-60}(5,3){\psbezier(0,0)(-1,4)(1,4)(0,0) \rput[u]{*0}(0,3.5){$G_{k-1}$}}
	\rput{-10}(5,3){\psdot[dotstyle=Bo](0,2)}
	\rput{-20}(5,3){\psdot[dotstyle=Bo](0,2)}
	\rput{-30}(5,3){\psdot[dotstyle=Bo](0,2)}

	\pscircle[fillstyle=solid, fillcolor=black](5,3){0.12}
	\uput[d](5,3){$g$}
\end{pspicture}

 \caption{A graph $G= \cup_{i=0}^{k-1} G_{i}$ where $G_{i} \cap G_{j} = \{g\}$ for all $i, j \in \{0, \ldots, k-1\}$, $i \neq j$.}
 \label{Amalgam}
 \end{center}
 \end{figure}
\end{lemma}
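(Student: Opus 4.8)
\emph{Overview of the plan.} I would first put the given encodings into a common normalized form, and then glue them by appending $\lceil\log k\rceil$ fresh coordinates that are used simultaneously to force the cross‑piece non‑edges and to leave every individual encoding intact. As a first step, using Observation~\ref{extensionOfEncoding}, extend each $\phi_{G_i}$ to a well‑begun $l'$‑encoding with $l'=\max_i l_i$; then, since permuting the symbols used within a single coordinate changes neither which vertices agree there nor the range of the first coordinate, I would further assume $\phi_{G_i}(g)=\{0\}^{l'}$ for every $i$. Since each $G_i$ is bipartite, $\chi(G_i)\le 2$, so the first coordinate of $\phi_{G_i}$ is a proper colouring of $G_i$ using colours $0,1$ in which $g$ gets colour $0$; let $G_i^0\ni g$ and $G_i^1$ be the two colour classes, and recall that $G_i^0$ is independent in $G_i$.

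\emph{The construction.} Next I would define $\phi_G$ on $l=l'+r$ coordinates, where $r=\lceil\log k\rceil$. For a vertex $x$ of $G_i$ let the first $l'$ coordinates of $\phi_G(x)$ equal $\phi_{G_i}(x)$; this is well defined because the only vertex shared by two pieces is $g$, whose value is $\{0\}^{l'}$ in every piece, and it makes the first coordinate of $\phi_G$ a proper colouring of $G$, which will yield the ``well‑begun'' property. Fix an injection $b\colon\{0,\dots,k-1\}\to\{0,1\}^r$ (possible since $2^r\ge k$). For $c\in[r]$ and $x\in G_i$, set the $(l'+c)$‑th coordinate of $\phi_G(x)$ to be $b(i)_c$ if $x\in G_i^0\setminus\{g\}$, to be $1-b(i)_c$ if $x\in G_i^1$, and to be the fresh symbol $2$ if $x=g$.

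\emph{Verification.} Edges of each $G_i$ survive: the first $l'$ coordinates already separate the two endpoints in all coordinates, and in each new coordinate they also differ, since one endpoint lies in $G_i^0$ (possibly $g$, carrying $2$) and the other in $G_i^1$. For any non‑edge $\{u,v\}$ with $u,v\in G_i$ (this includes pairs with $v=g$), $\phi_{G_i}(u)$ and $\phi_{G_i}(v)$ already agree in some coordinate $\le l'$, hence so do $\phi_G(u),\phi_G(v)$; and injectivity on each $G_i$ is inherited from $\phi_{G_i}$, which in particular handles $g$ against every vertex. For a cross‑piece pair $x\in G_i$, $y\in G_j$ with $i\neq j$ and $x,y\neq g$: if $x,y$ get the same colour they agree in the first coordinate; if not, say $x\in G_i^1$ and $y\in G_j^0$, then in coordinate $l'+c$ they carry $1-b(i)_c$ and $b(j)_c$, which coincide exactly when $b(i)_c\neq b(j)_c$, and such a $c$ exists because $b(i)\neq b(j)$ (the opposite assignment of colours is symmetric). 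The same complement‑code comparison, together with the first coordinate in the case where the two codes happen to be complementary, gives injectivity across pieces. Finally, one checks the first coordinate of $\phi_G$ stays within $\{0,\dots,\chi(G)-1\}$, so $\phi_G$ is well‑begun, and it uses $l'+r=\max_i l_i+\lceil\log k\rceil$ coordinates.

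\emph{Main obstacle.} The delicate point is designing the new coordinates given that $g$ lies in the colour class $G_i^0$ of \emph{every} piece. The first coordinate alone cannot separate cross‑piece pairs of opposite colour, and the naive remedy --- adding coordinates in which all of $G_i^0$ copies $g$'s value --- merely duplicates the $2$‑colouring and accomplishes nothing. The resolution is to let $G_i^0$ carry the code $b(i)$ while $G_i^1$ carries its bitwise complement, so that part‑$1$ of $G_i$ and part‑$0$ of $G_j$ are forced to agree precisely on the set of coordinates where $b(i)$ and $b(j)$ differ --- nonempty since the codes are distinct --- while a third symbol at $g$ keeps $g$ apart from its neighbours without disturbing anything. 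Making this choice, and checking that it leaves the internal structure of each $G_i$ untouched, is the crux; the rest is bookkeeping.
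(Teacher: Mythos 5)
Your proof is correct and follows essentially the same construction as the paper: extend all encodings to a common length, normalize so $g$ maps to $\{0\}^{l'}$, append a distinct $r$-bit code $b(i)$ to the colour-0 class of $G_i$, the bitwise complement to the colour-1 class, and a fresh symbol $2$ to $g$. The only cosmetic difference is that the paper fixes $b(i)$ to be the binary representation of $i$ while you allow an arbitrary injection into $\{0,1\}^r$; the verification of edges, non-edges, cross-piece pairs, injectivity, and the well-begun property all match the paper's argument.
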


\begin{proof}
From Observation \ref{extensionOfEncoding}, without loss of generality we can assume that $l_{0} = \cdots = l_{k-1}= \max_{i}\{l_{i}\}$. Since we can rename the alphabets used in each coordinate of an encoding independently of the other coordinates, it is safe to assume that the vertex $g$ gets the encoding $\{0\}^{l_{0}}$ in every $\phi_{G_{i}}$. For all $ 0 \leq i \leq k-1$, let $b_{0}(i)$ denote the binary representation of $i$ using exactly $ \lceil \log k \rceil $ bits and $b_{1}(i)$ denote the bitwise complement of $b_{0}(i)$. The $l$-encoding $\phi _{G}$ of $G$ is as follows.

For all $i$, $0 \leq i \leq k-1$, for every $x \in V(G_{i} \setminus \{g\})$ 
\begin{eqnarray} \label{eqn}
 \phi_{G}(x) &=& \begin{cases}
	     \phi_{G_{i}}(x)  b_{0}(i)  & \textnormal{if $\phi_{G_{i}}(x)$ begins with $0$}\\ 
 	     \phi_{G_{i}}(x)  b_{1}(i)  & \textnormal{if $\phi_{G_{i}}(x)$ begins with $1$}\\ 
  \end{cases} \nonumber \\
\phi_{G}(g) &=& \{0\}^{l_{0}}\{2\}^{\lceil \log k \rceil}
\end{eqnarray}

We can verify that $\phi_{G}$ is a valid $l$-encoding of $G$ from the following argument.

Let $x, y \in V(G_{i} \setminus \{g\})$. If $\{x,y\} \in E(G_{i})$ then the first coordinates of $\phi_{G_{i}}(x)$ and $\phi_{G_{i}}(y)$ are different. Thus, the extra coordinates added to $\phi_{G_{i}}(x)$ and $\phi_{G_{i}}(y)$ to get $\phi_{G}(x)$ and $\phi_{G}(y)$ are complements of each other (by Equation (\ref{eqn})). If $\{x,y\} \not \in E(G_{i})$, then $\phi_{G_{i}}(x)$ and $\phi_{G_{i}}(y)$ agreed in some coordinate, say $t$. Hence, $\phi_{G}(x)$ and $\phi_{G}(y)$ also agree in the $t$-th coordinate.

Let $x \in V(G_{i} \setminus \{g\})$ and $y \in V(G_{j} \setminus \{g\})$ for some $i,j \in \{0, \ldots, k-1\}$, $i \neq j$. Note that, since $G_{i} \cap G_{j} = \{g\}$, $\{x,y\} \not \in E(G)$. If $\phi_{G_{i}}(x)$ and $\phi_{G_{i}}(y)$ agree in the first coordinate then $\phi_{G}(x)$ and $\phi_{G}(y)$ also agree in the first coordinate. If $\phi_{G_{i}}(x)$ begins with $0$ and $\phi_{G_{i}}(y)$ begins with $1$, then $\phi_{G}(x)=\phi_{G_{i}}(x) b_{0}(i)$  and $\phi_{G}(y)=\phi_{G_{j}}(y)  b_{1}(j)$. Since $i \neq j$, $b_{0}(i)$ and  $b_{1}(j)$ agree in some coordinate.

For any $i$, let $x \in V(G_{i} \setminus\{g\})$. If $\{g,x\} \not \in E(G_{i})$, then $\phi_{G_{i}}(g)$ and $\phi_{G_{i}}(x)$ agreed in some coordinate, say $t$. Hence, $\phi_{G}(g)$ and $\phi_{G}(x)$ also agree in the $t$-th coordinate. Otherwise, since $\phi_{G_{0}}(g)$ begins with $0$, $\phi_{G_{i}}(x)$ must begin with $1$. Thus, the extra coordinates added to $\phi_{G_{i}(x)}$ to get $\phi_{G}(x)$ are $b_{1}(i)$ while the extra coordinates added to $\phi_{G_{0}}(g)$ to get $\phi_{G}(g)$ are $\{2\}^{\lceil \log k \rceil}$. Therefore, $\phi_{G}(x)$ and $\phi_{G}(g)$ disagree in all coordinates.

It is easy to see from Equation \ref{eqn} that $\phi(G)$ is well-begun.
\end{proof}

\begin{theorem}\label{prodDimTrees}
For any forest $T$ on $n$ vertices, $pdim(T) \leq 1.441 \log n + 3$.
\end{theorem}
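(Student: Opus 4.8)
The plan is a divide-and-conquer induction on $n$, with one twist: the right induction hypothesis is a bound not on $pdim$ itself but on the smallest size of a \emph{well-begun} encoding, since the Amalgamation Lemma for Bipartite Graphs (Lemma~\ref{lem2}) --- the only gluing tool at hand --- consumes well-begun encodings of the parts and returns a well-begun encoding of the whole. So I would let $f(n)$ be the least $l$ such that every forest on $n$ vertices has a well-begun $l$-encoding; since forests are bipartite, Lemma~\ref{lem2} is applicable to them, and trivially $pdim(T)\leq f(|T|)$, so it suffices to bound $f(n)$.

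For the inductive step I would take a forest $T$ on $n$ vertices and apply the Splitting Lemma for Forests (Lemma~\ref{lem1}) with the particular value $\epsilon=\tfrac{1}{\varphi}-\tfrac{1}{2}$, where $\varphi=\tfrac{1+\sqrt{5}}{2}$ is the golden ratio (note $\epsilon>0$). This choice is the heart of the constant: since $\varphi^2=\varphi+1$, it makes $\tfrac{1}{2}+\epsilon=\tfrac{1}{\varphi}$ and $\tfrac{1}{2}-\epsilon=\tfrac{1}{\varphi^2}$, so that the two alternatives of Lemma~\ref{lem1} are exactly balanced against the gluing cost --- a $2$-split yields two parts of at most $n/\varphi$ vertices at a cost of $\lceil\log 2\rceil=1$ extra coordinate, a $3$-split yields three parts of at most $n/\varphi^2$ vertices at a cost of $\lceil\log 3\rceil=2$, and $-\log_\varphi(1/\varphi)=1$, $-\log_\varphi(1/\varphi^2)=2$. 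Concretely, given an $(\epsilon,2)$-split vertex $v$ with $T\setminus\{v\}=T_1\uplus T_2$, set $G_i=T[V(T_i)\cup\{v\}]$: each $G_i$ is again a forest (a subgraph of $T$), $G_1\cap G_2$ is the single vertex $v$ with no edges (since $T_1,T_2$ lie in different components of $T\setminus\{v\}$), $G_1\cup G_2=T$, and $|G_i|=|T_i|+1\leq\tfrac{n}{\varphi}+1$; applying the induction hypothesis to $G_1,G_2$ and Lemma~\ref{lem2} with $k=2$ gives $f(n)\leq\max_i f(|G_i|)+1$. The $(\epsilon,3)$-split case is identical, with three parts of size at most $\tfrac{n}{\varphi^2}+1$ and $k=3$, giving $f(n)\leq\max_i f(|G_i|)+2$.

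It remains to solve this recursion. If the parts had size exactly $n/\varphi$ (resp.\ $n/\varphi^2$), the recursion would already give $f(n)\leq\log_\varphi n$, and since $\tfrac{1}{\log_2\varphi}=1.4404\ldots<1.441$ we would be done with an even smaller additive term; the only obstruction is the ``$+1$'' in the part sizes. I would absorb it by proving the shifted bound $f(n)\leq\log_\varphi(n-\varphi^2)+3$ for all $n$ beyond a small threshold. The shift $-\varphi^2$ is exactly the one that works: using $\varphi^2=\varphi+1$ and $\varphi^3=\varphi^2+\varphi$ one gets $\varphi\bigl(\tfrac{n}{\varphi}+1-\varphi^2\bigr)=n-\varphi^2$ and $\varphi^2\bigl(\tfrac{n}{\varphi^2}+1-\varphi^2\bigr)\leq n-\varphi^2$, so the shifted quantity contracts by exactly (resp.\ at least) the factor needed to pay for the added coordinate(s) in each case. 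The remaining ingredients are the small cases: a single vertex has a well-begun $1$-encoding, and one checks $f(n)\leq 3$ for $n\leq 4$ --- most cleanly by noting that any bipartite graph of product dimension $p$ has a well-begun $(p+1)$-encoding (prepend a proper $2$-colouring as a new first coordinate), which reduces this to the trivial bounds on $pdim$ for forests with at most $4$ vertices --- together with a direct check that the induction goes through at the first few values of $n$ for which a recursive part can still be one of these small forests. Combining, $f(n)\leq\log_\varphi n+3$ for every $n$, so $pdim(T)\leq f(n)\leq 1.441\log n+3$.

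The conceptual step is the choice $\epsilon=\tfrac{1}{\varphi}-\tfrac{1}{2}$; once it is in place the recursion writes itself. I expect the only real friction in a careful write-up to be the additive-constant bookkeeping --- pinning down the shift $-\varphi^2$ and handling the base cases and the boundary of the induction tightly enough that the constant comes out $3$ rather than $4$.
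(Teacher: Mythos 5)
Your proposal matches the paper's proof: same divide-and-conquer on the auxiliary quantity ``least $l$ admitting a well-begun $l$-encoding'', same Splitting Lemma applied with the same split parameter (your $\epsilon=\tfrac{1}{\varphi}-\tfrac{1}{2}$ equals the paper's $\tfrac{\sqrt 5}{2}-1$), same Amalgamation Lemma for bipartite graphs, and the same golden-ratio balancing of a 2-split costing one coordinate against a 3-split costing two. The only differences are bookkeeping: you propose solving the recurrence by a shifted induction hypothesis $f(n)\le\log_\varphi(n-\varphi^2)+3$, while the paper instead tracks the counts $k_2,k_3$ of split types along a root-to-leaf path and bounds the residual size by $\alpha^{k_2+2k_3}n+\tfrac{1}{1-\alpha}$; and you dispatch the base case via ``prepend a proper 2-colouring'' rather than the paper's explicit enumeration of the six forests on at most three vertices. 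Both variants are correct and yield the same constant.
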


\begin{proof}
Let $V(T)=\{v_{0}, \ldots, v_{n-1}\}$, $f: V(T) \longrightarrow \{ 0, 1, \ldots , n-1\}$ be a bijection, and $f_{i}=f(v_{i})$. We use a divide and conquer strategy to prove the theorem. Let $C(T)$ denote the minimum $l$ such that there exists a \emph{well-begun} $l$-encoding of $T$. Let $C(n)=\max\{C(T) : T$ is a forest on at most n vertices$\}$.

Base Case: All possible forests with $|V(T)| \leq 3$ with there well-begun $3$-encodings are shown in Figure \ref{Encodings}. Thus, $C(3) \leq 3$. 

\begin{figure}[ht]
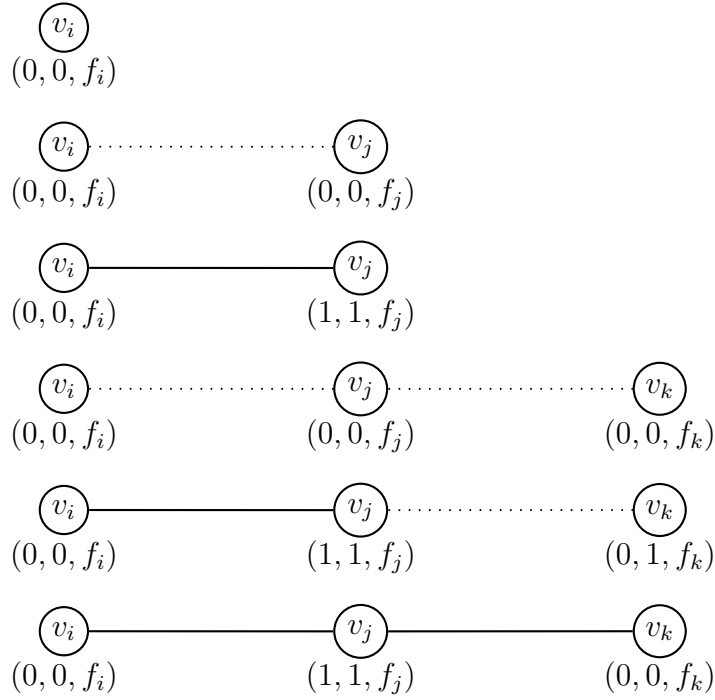

 \begin{center}
 \psmatrix[colsep=2.5cm,rowsep=0cm]
	[mnode=circle]$v_i$ \\
	$(0,0,f_i)$ 		\\
	&\\
	[mnode=circle]$v_i$ & [mnode=circle]$v_j$ 	\\
		\ncline[linestyle=dotted]{4,1}{4,2}
	$(0,0,f_i)$ 		& $(0, 0, f_j)$ 		\\
	&\\
	[mnode=circle]$v_i$ & [mnode=circle]$v_j$ 	\\ 
		\ncline{7,1}{7,2}
	$(0,0,f_i)$ 		& $(1, 1, f_j)$ 		\\
	&\\
	[mnode=circle]$v_i$ & [mnode=circle]$v_j$ 	& [mnode=circle]$v_k$ 	\\
		\ncline[linestyle=dotted]{10,1}{10,2}
		\ncline[linestyle=dotted]{10,2}{10,3}
	$(0,0,f_i)$ 		& $(0, 0, f_j)$ 		& $(0,0,f_k)$			\\
	&\\
	[mnode=circle]$v_i$ & [mnode=circle]$v_j$ 	& [mnode=circle]$v_k$ 	\\ 
		\ncline{13,1}{13,2}
		\ncline[linestyle=dotted]{13,2}{13,3}
	$(0,0,f_i)$ 		& $(1, 1, f_j)$ 		& $(0,1,f_k)$			\\
	&\\
	[mnode=circle]$v_i$ & [mnode=circle]$v_j$ 	& [mnode=circle]$v_k$ 	\\ 
		\ncline{16,1}{16,2}
		\ncline{16,2}{16,3}
	$(0,0,f_i)$ 		& $(1, 1, f_j)$ 		& $(0,0,f_k)$			\\
	&\\
 \endpsmatrix
 \caption{Well-begun $3$-encodings of the six forests with at most $3$ vertices. Each row depicts a single forest and dotted lines are non-edges.}
 \label{Encodings}
 \end{center}
 \end{figure}

Note that the third coordinate of each of the encodings is always a unique number associated with the vertex. This ensures injectivity of all the encodings that we get during the conquer steps.

Divide and Conquer: In our divide and conquer strategy, the divide operation corresponds to the two splitting operations of Lemma \ref{lem1} viz. \emph{($\epsilon$,2)-splitting} and \emph{($\epsilon$,3)-splitting} and the conquer operation corresponds to the amalgamation operation of Lemma \ref{lem2}. 

Choose $\epsilon =\frac{\sqrt{5}}{2}-1$. Let $\alpha = \frac{1}{2} + \epsilon$ and $\beta =\frac{1}{2} - \epsilon$. Note that $\alpha^{2}=\beta$. By Lemma \ref{lem1}, there exists either an \emph{($\epsilon$,2)-split vertex} or an \emph{($\epsilon$,3)-split vertex}, say $v \in V(T)$. 
If $v$ is an \emph{($\epsilon$,2)-split vertex}, then from Definition \ref{splittingDefn}, $T \setminus \{v\} = T_{1} \uplus T_{2}$ such that $|T_{1}|, |T_{2}| \leq \alpha n$. Let $T_{i}^{'}=T_{i} \cup \{v\}$, $i \in [2]$. Let $\phi_{T_{i}^{'}}$ be a \emph{well-begun} $l_{i}$-encoding of $T_{i}^{'}$, $i \in [2]$. Then by Lemma \ref{lem2}, there exists a \emph{well-begun} $l$-encoding $\phi_{T}$ of $T$ with $l=max\{l_{1},l_{2}\} + 1$. 
Similarly, if $v$ is an \emph{($\epsilon$,3)-split vertex}, then from Definition \ref{splittingDefn}, $T \setminus \{v\} = T_{1} \uplus T_{2} \uplus T_{3}$ such that $|T_{1}|, |T_{2}|, |T_{3}| \leq \beta n$. Let $T_{i}^{'}=T_{i} \cup \{v\}$, $i \in [3]$. Let $\phi_{T_{i}^{'}}$ be a \emph{well-begun} $l_{i}$-encoding of $T_{i}^{'}$, $i \in [3]$. Then by Lemma \ref{lem2}, there exists a \emph{well-begun} $l$-encoding $\phi_{T}$ of $T$ with $l=max\{l_{1},l_{2},l_{3}\} + 2$.

Therefore, the following recurrence relation holds.

\begin{eqnarray}
C(n) &\leq& \max \{C(\alpha n +1) + 1, C(\beta n +1) + 2 \} \nonumber \\ 
C(3) &\leq& 3 
\end{eqnarray}
Solving the recurrence: Let $X$ be an arbitrary leaf in the recurrence tree and let $P$ denote the path from the root to $X$. Let the number of \emph{($\epsilon$,2)-split} operations and \emph{($\epsilon$,3)-split} operations along $P$ be $k_{2}$ and $k_{3}$ respectively. Let $s_{i}$ be the size of the subgraph of $T$ to be conquered along $P$ after $i$ steps. Let $\gamma _{1},\ldots , \gamma _{k}$, $k=k_{2}+k_{3}$, be such that

\begin{equation}
 \gamma _{i} = \begin{cases}
	     \alpha  & \textnormal{if the $i$-th divide operation along $P$ is an \emph{($\epsilon$,2)-split} operation}\\
 	     \beta   & \textnormal{if the $i$-th divide operation along $P$ is an \emph{($\epsilon$,3)-split} operation} 
  \end{cases} 
\end{equation} 

Therefore, $s_{k} \leq (\prod_{j=1}^{k} \gamma _{j})n + \prod_{j=2}^{k} \gamma _{j} + \prod_{j=3}^{k} \gamma _{j} + \ldots + \prod_{j=k}^{k} \gamma _{j} +1$. Since $\gamma _{i} \leq \alpha$ for all $i$, $1 \leq i \leq k$, $s_{k} \leq (\prod_{j=1}^{k} \gamma _{j})n + \alpha ^{k-1} + \alpha ^{k-2} + \ldots + \alpha +1 \leq \alpha ^{k_{2}} \beta ^{k_{3}} n + \frac{1}{1- \alpha} \leq \alpha ^{k_{2}} \beta ^{k_{3}} n + 2.62$. Hence, $s_{k} \leq \lfloor \alpha^{k_{2}+2k_{3}}n + 2.62 \rfloor$. Note that $k_{2}+2k_{3}$ is the total cost of conquering (number of coordinates introduced by the amalgamation operation) incurred along $P$. Since $X$ is arbitrary, $C(n) \leq k_{2} + 2 k_{3} + C(s_{k})$.

Let $k_{2}+2k_{3} \geq 1.441 \log n$. Then $s_{k} \leq 3$. Hence, $C(n) \leq 1.441 \log n  + C(3) \leq 1.441 \log n  +3$. Therefore, $pdim(T) \leq 1.441 \log n + 3$.
\end{proof}
\section{Product Dimension of Bounded Treewidth Graphs}

\begin{definition} [Definition 1, \cite{chandran2007boxicity}]
A \emph{tree decomposition} of $G$ is a pair $(\{X_{i} : i \in I \},T )$, where $I$ is an index set, $\{X_{i} : i \in I \}$ is a collection of subsets of $V(G)$ and $T$ is a tree whose node set is $I$, such that the following conditions are satisfied:

\begin{enumerate}
 \item $\cup_{i \in I} X_{i}=V(G)$.
 \item $\forall \{u,v\} \in E(G), \exists i \in I$ such that $u,v \in X_{i}$.
 \item $\forall i,j,k \in I$ : if $j$ is on a path in $T$ from $i$ to $k$, then $X_{i} \cap X_{k} \subset X_{j}$.
\end{enumerate}

The \emph{width of a tree decomposition} $(\{ X_{i} : i \in I \},T)$ is $ \max \{|X_{i}| : i \in I\} -1$. The \emph{treewidth} of $G$, \emph{tw(G)}, is the minimum width over all tree decompositions of $G$. 

\end{definition}

Note that by a {\em rooted tree} we mean a tree with a vertex designated as the {\em root vertex}.


\begin{definition}[Definition 2, \cite{chandran2007boxicity}]
A \emph{normalized tree decomposition} of a graph $G$ is a triple $(\{X_{i} : i \in I \},r \in I,T )$ where $(\{X_{i} : i \in I \},T )$ is a tree decomposition of $G$ that additionally satisfies the following two properties:
\begin{enumerate}
\setcounter{enumi}{3}
 \item It is a rooted tree where the subset $X_{r}$ that corresponds to the root node $r$ contains exactly one vertex.
 \item For any node $i$, if $i^{'}$ is the child of $i$, then $|X_{i}^{'} - X_{i}|=1$ where, $X_{i^{'}} - X_{i}$ denoted the symmetric difference of $X_{i^{'}}$ and $X_{i}$.
\end{enumerate}
\end{definition}

\begin{lemma}[Lemma 3, \cite{chandran2007boxicity}]
For any graph $G$ there is a normalized tree decomposition with width equal to $tw(G)$.
\end{lemma}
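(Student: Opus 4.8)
The plan is to start from a tree decomposition $(\{X_i : i \in I\},T)$ of $G$ of minimum width $tw(G)$ and to transform it by a sequence of local surgeries, none of which raises the width, until the two extra properties of a normalized tree decomposition hold. Throughout I will use the standard equivalent form of condition~3: for every vertex $v\in V(G)$ the set $\{i\in I : v\in X_i\}$ induces a (nonempty) connected subtree of $T$. If $V(G)=\emptyset$ there is nothing to prove, so assume $V(G)\neq\emptyset$.

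First I would do two preprocessing steps. (i) Contract every edge of $T$ both of whose endpoints carry the same bag; contracting an edge of a tree leaves a tree, and since the merged node retains that common bag, all three conditions of a tree decomposition and the width are preserved. After this, adjacent nodes carry distinct bags. (ii) Root $T$ at a node $r$ with $X_r\neq\emptyset$, which exists since $\bigcup_i X_i = V(G)\neq\emptyset$; this installs property~4 only partially (the bag need not yet be a singleton), but it fixes the orientation we will use.

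Next I would enforce property~5. For each edge $\{i,i'\}$ of $T$, say with $i$ the parent of $i'$, if $|X_i\triangle X_{i'}|\ge 2$ then delete this edge and reconnect $i$ to $i'$ by a fresh path of new nodes whose bags interpolate between $X_i$ and $X_{i'}$: first delete the vertices of $X_i\setminus X_{i'}$ from $X_i$ one at a time (so that the path passes through the bag $X_i\cap X_{i'}$) and then insert the vertices of $X_{i'}\setminus X_i$ one at a time (reaching $X_{i'}$). Every new bag is contained in $X_i$ or in $X_{i'}$, hence has size at most $tw(G)+1$; consecutive bags along the path differ in exactly one vertex; and since $X_i\neq X_{i'}$ after preprocessing, the path contains at least one new node, so no new equal-bag edge is created. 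Performing this for every original edge makes the symmetric difference of each parent–child pair exactly $1$. Finally, to enforce property~4, if $|X_r|\ge 2$ I attach above $r$ a path of $|X_r|-1$ new nodes whose bags are $X_r$ with, successively, one, two, three, \dots\ of its vertices removed, the topmost bag being a singleton $\{v\}$, and I re-root at that topmost node. Again each new bag is a subset of $X_r$ (so of size $\le tw(G)+1$), consecutive bags — including the pair consisting of the lowest new node and $r$ — differ in exactly one vertex, and no equal-bag edge is created.

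What remains is to check that conditions~1--3 survive all these insertions and that the width is unchanged; this is the only delicate part, though it is routine bookkeeping. Conditions~1 and~2 are immediate since no bag is ever deleted. For condition~3, inspect one inserted $i$--$i'$ path: a vertex of $X_i\cap X_{i'}$ lies in every bag of the path; a vertex of $X_i\setminus X_{i'}$ lies exactly in a connected prefix of the path, which hangs off the still-connected subtree of that vertex on the $i$-side; symmetrically for a vertex of $X_{i'}\setminus X_i$; and a vertex outside $X_i\cup X_{i'}$ is untouched. The path added above the root merely prolongs, at one end, the subtree of each vertex of $X_r$. Since every bag appearing in the final object has size at most $tw(G)+1$ and the original width was $tw(G)$, the width is exactly $tw(G)$, and the resulting rooted, bag-labelled tree is a normalized tree decomposition. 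The main obstacle is thus not conceptual but lies in packaging the connectivity verification cleanly and in not overlooking the first preprocessing step, without which mere edge subdivision could never make the symmetric differences equal to (rather than merely at most) $1$.
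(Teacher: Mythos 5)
The paper invokes this lemma by citation (Lemma~3 of \cite{chandran2007boxicity}) and does not reproduce a proof, so there is no in-paper argument to compare against; your proof stands on its own and is correct. The three surgeries you use---contracting equal-bag edges, interpolating each tree edge through $X_i\cap X_{i'}$ by removing then adding one vertex at a time, and growing a path of shrinking bags above the root down to a singleton---are exactly the standard construction, and your connectivity check is sound: for $v\in X_i\cap X_{i'}$ the unique $i$--$i'$ path in $T$ is the edge $\{i,i'\}$ itself, so the whole interpolating path lies inside $v$'s subtree, while for $v$ in only one of the two bags the added prefix or suffix simply extends $v$'s subtree at the node it already occupied. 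You also correctly identify that the initial contraction step is not optional: without it a parent--child pair with identical bags has symmetric difference $0$, which no amount of subdivision can raise to $1$.
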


\begin{lemma}[Splitting Lemma for Bounded Treewidth Graphs]\label{splitDegen}
Let $G$ be a graph on $n$ vertices with $tw(G)=t$ and a normalized tree decomposition $(\{X_{i}: i \in I\}, r \in I, T)$ of width $t$. Then there exists $l \in I$ such that $G \setminus X_{l} = G_{1} \uplus G_{2} \uplus G_{3}$ and $|G_{i}| \leq \frac{1}{2}(n-|X_{l}| + 1)$, $i \in [3]$, where $G_{1}, G_{2}$ and $G_{3}$ are subgraphs of $G$.
\end{lemma}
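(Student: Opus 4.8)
The plan is to locate a suitable bag $X_l$ by a centroid‑type descent on the tree $T$, and then to group the components of $G\setminus X_l$ into three parts of the required size. I will repeatedly use the standard separator property of tree decompositions: for a node $i\in I$, if $T^{(1)},\dots,T^{(m)}$ are the components of $T\setminus i$, then the sets $Y_j(i):=\bigl(\bigcup_{k\in T^{(j)}}X_k\bigr)\setminus X_i$ partition $V(G)\setminus X_i$ and no edge of $G$ runs between $Y_j(i)$ and $Y_{j'}(i)$ for $j\neq j'$. This holds because the set of nodes whose bag contains a fixed vertex $v$ is a connected subtree of $T$ (condition~3); hence if $v\notin X_i$ and $v$ lies in a bag of $T^{(j)}$, then every bag containing $v$ lies in $T^{(j)}$. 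In particular every component of $G\setminus X_i$ is contained in some $Y_j(i)$, so it suffices to bound $|Y_j(i)|$. Write $S_i:=\tfrac12(n-|X_i|+1)$ and call $i$ \emph{good} if $|Y_j(i)|\le S_i$ for all $j$.

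I would first produce a good node. Since $\sum_j|Y_j(i)|=n-|X_i|=2S_i-1$, at most one $Y_j(i)$ can exceed $S_i$; if $i$ is not good, call that piece the \emph{heavy branch} and let $i'$ be the unique neighbour of $i$ in it. Starting at an arbitrary node and repeatedly moving to the neighbour in the heavy branch, I claim the walk terminates at a good node. The key estimate: suppose $|Y_1(l)|>S_l$, let $T^{(1)}$ be the corresponding component of $T\setminus l$, let $l'$ be the neighbour of $l$ in $T^{(1)}$, let $T^{(0)}$ be the component of $T\setminus l'$ containing $l$, and put $C^*:=\bigl(\bigcup_{k\in T^{(0)}}X_k\bigr)\setminus X_{l'}$. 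Every vertex of $Y_1(l)$ appears only in bags of $T^{(1)}$ and hence in no bag of $T^{(0)}$, while $X_l\cap X_{l'}\subseteq X_l\subseteq\bigcup_{k\in T^{(0)}}X_k$ is disjoint from $Y_1(l)$ and excluded from $C^*$; since the decomposition is normalized, $|X_l\cap X_{l'}|\ge|X_l|-1$, so
\[
 |C^*|\;\le\;n-|Y_1(l)|-|X_l\cap X_{l'}|\;<\;n-S_l-(|X_l|-1)\;=\;S_l ,
\]
and a one‑line check of the two cases $|X_{l'}|=|X_l|\pm1$ gives $|C^*|\le S_{l'}$. Thus after each move the ``back'' piece is never heavy, so any heavy branch at $l'$ is one of the ``forward'' components of $T\setminus l'$; each of these is a proper subtree of $T^{(1)}$ and (by the same argument as above) has vertex set contained in $Y_1(l)$. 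Hence the node set of the heavy branch strictly shrinks along the walk, the walk terminates, and the terminal node $l$ is good.

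Finally I would turn a good node $l$ into the three‑part split. Let $a_1\ge a_2\ge\cdots$ be the sizes of the components of $G\setminus X_l$, so each $a_p\le S_l$ and $\sum_p a_p=W:=n-|X_l|=2S_l-1$. Place the largest component alone into $G_1$ (size $a_1\le S_l$), and split the remaining components into two groups as evenly as possible (greedily adding them, largest first, to the currently lighter group), so the two group sizes differ by at most $a_2$; the larger group then has at most $\tfrac12\bigl((W-a_1)+a_2\bigr)\le\tfrac12 W<S_l$ vertices, hence at most $S_l$ since it is an integer. Taking $G_1,G_2,G_3$ to be the subgraphs of $G$ induced by these three vertex sets gives $G\setminus X_l=G_1\uplus G_2\uplus G_3$ (no edges run between distinct components, hence none between the $G_i$) with $|G_i|\le S_l=\tfrac12(n-|X_l|+1)$ for $i\in[3]$. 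I expect the main obstacle to be getting the sharp bound $S_l$ rather than the easy $n/2$: this is exactly where the normalized structure enters, through $|X_l\cap X_{l'}|\ge|X_l|-1$, both in the displayed inequality and in driving the termination of the descent.
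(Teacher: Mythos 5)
Your proof is correct, and it reaches the same conclusion by a somewhat different route than the paper. The paper picks the node $l$ by a global extremal choice: it minimizes $|C_{1}(l)|$ over all nodes and, crucially, uses a secondary tie-break (smallest $|X_l|$ among minimizers) to make the contradiction go through in the case where stepping into the heavy branch shrinks the bag rather than the component. You instead run a centroid-type descent, and your termination argument is driven by a different monotone quantity -- the \emph{node set} of the heavy branch strictly shrinks at each step, because the ``back'' piece at $l'$ is bounded by $S_{l'}$ and every other component of $T\setminus l'$ is a proper subtree of the old heavy branch. This sidesteps the tie-break entirely, which is a genuine (if small) simplification; the price is that you must track the changing target $S_i$ across a step, which you do correctly via $|X_l\cap X_{l'}|\ge|X_l|-1$ and the case split $|X_{l'}|=|X_l|\pm1$. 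For the three-way grouping, the paper takes a partition into parts of size at most $S_l$ with the minimum number of parts and shows by pigeonhole that this minimum is at most $3$; you use an explicit greedy balancing (largest component alone, remaining components balanced into two groups with imbalance at most $a_2$), which gives the same bound $\tfrac12((W-a_1)+a_2)\le\tfrac12 W<S_l$ and is arguably more constructive. Both proofs rely on the same structural facts (separator property of tree decompositions and the normalized condition $|X_m\bigtriangleup X_l|=1$), so the approaches are close in spirit, but your descent-plus-greedy version is cleaner in detail.
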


\begin{proof}
 For every $i$, let $D_{1}(i), \dots, D_{t}(i)$ be the components of $T \setminus \{i\}$ and let $C_{l}(i)$, $j \in [t]$, be the graphs induced by $(\cup_{j \in V(D_{l}(i))} X_{j}) - X_{i}$. Without loss of generality assume that $|C_{1}(i)| \geq \cdots \geq |C_{t}(i)|$. 
 
Let $c= \min \{|C_{1}(j)| : j \in I \}$ and $I^{'} = \{j \in I : |C_{1}(j)| =c\}$. Then choose $l \in I^{'}$ such that $|X_{l}| = \min \{|X_{j}| : j \in I^{'}\}$. We claim that, $|C_{1}(l)| \leq \frac{1}{2} (n-|X_{l}| + 1)$. For the sake of contradiction, assume that $|C_{1}(l)| > \frac{1}{2}(n - |X_{l}| + 1)$. Let $m \in N_{T}(l) \cap D_{1}(l)$. Then, since $T$ is a normalized tree decomposition $|X_{m}-X_{l}|=1$, therefore, the following two cases arise.

\begin{description}
 \item [Case 1] $X_{m}=X_{l} \cup \{v\}$ where $v \in V(G)$ \hfill \\
 If $D_{1}(m) \subset D_{1}(l)$, then $|C_{1}(m)| < |C_{1}(l)|$ because $C_{1}(m) = C_{1}(l) \setminus \{v\}$. Otherwise, $D_{1}(m)=T \setminus D_{1}(l)$ in which case $|C_{1}(m)| = |G \setminus (C_{1}(l) \cup X_{l})| = n -|C_{1}(l)| -|X_{l}| < n - \frac{1}{2}(n - |X_{l}| +1) - |X_{l}| =\frac{1}{2}(n - |X_{l}| -1) < |C_{1}(l)|$. In either case, $|C_{1}(m)| < |C_{1}(l)|$, contradicting the choice of $l$. 
 \item [Case 2] $X_{m}=X_{l} \setminus \{v\}$ where $v \in V(G)$ \hfill \\
If $D_{1}(m) \subset D_{1}(l)$, then $|C_{1}(m)| \leq |C_{1}(l)|$. If $|C_{1}(m)| < |C_{1}(l)|$, then $|C_{1}(m)|$ is not the minimum amongst all $|C_{1}(j)|$, $j \in I$ and if $|C_{1}(m)| = |C_{1}(l)|$ then, since $|X_{m}| < |X_{l}|$, the choice of $l$ is contradicted. On the other hand, if $D_{1}(m) = T \setminus D_{1}(l)$, then $|C_{1}(m)|= |G \setminus (C_{1}(l) \cup X_{m})|=n - |C_{1}(l)| - |X_{m}| < n -\frac{1}{2}(n - |X_{l}| + 1) - |X_{l}| +1 =\frac{1}{2}(n - |X_{l}| +1)$ again contradicting the choice of $l$.
\end{description}

Hence $C_{1}(l) \leq \frac{1}{2}(n - |X_{l}| + 1)$ i.e., $G \setminus X_{l}= C_{1}(l) \uplus \cdots \uplus C_{t}(l)$ such that $|C_{j}(l)| \leq \frac{1}{2}(n- |X_{l}| + 1)$ for all $j \in [t]$. 

Consider a partition $I_{1} \uplus \cdots \uplus I_{r} = [t]$ with minimum possible $r$ such that $|\cup_{j \in I_{i}} C_{j}(l)| \leq \frac{1}{2}(n-|X_{l}| +1)$ for all $i \in [r]$. Let $\cup_{j \in I_{i}} C_{j}(l) = H_{i}$ for all $i \in [r]$. Rename all $H_{i}$'s such that $|H_{1}| \geq \cdots \geq |H_{r}|$. We claim that for such a partition $r \leq 3$ because if $r \geq 4$ then $|\cup_{j = \lceil \frac{r}{2} \rceil + 1}^{r} H_{j}| \leq \frac{1}{2}(n-|X_{l}|)$ by the pigeonhole principle contradicting the choice of the partition $I_{1} \uplus \cdots \uplus I_{r}$. Set $G_{i}=H_{i}$ for $i \in [3]$ and we are done.
\end{proof}

\begin{lemma}[Amalgamation Lemma for General Graphs]\label{amalgamDegen}
 Let $G=G_{1} \cup G_{2} \cup G_{3}$ where $G_{1}, G_{2}$ and $G_{3}$ are graphs such that $G_{i} \cap G_{j} = S$ for all $i,j \in [3]$ and $i \neq j$. Let $G_{1}^{'}, G_{2}^{'}$ and $G_{3}^{'}$ be graphs such that $V(G_{i}^{'}) = V(G_{i})$ and $E(G_{i}^{'})=E(G_{i}) \cup \{\{v, v^{'}\} : v, v^{'} \in V(S)\}$ for all $i \in [3]$. Let $\phi_{G_{i}^{'}}$ be an $l_{i}$-encoding of $G_{i}^{'}$ for all $i \in [3]$ and $\phi_{S}$ be an $l_{s}$-encoding of $S$. Then we can construct an $l$-encoding of $G$, where 
\begin{eqnarray}
 l = \begin{cases}
      \max\{l_{1}, l_{2}, l_{3}\} + \max \{\chi(G \setminus S) + 1  , l_{s}\} & \textnormal{if $\chi(G \setminus S) =2$ or $6$} \\
      \max\{l_{1}, l_{2}, l_{3}\} + \max \{\chi(G \setminus S), l_{s}\} & \textnormal{otherwise}
     \end{cases}
\end{eqnarray}	

\begin{figure}[ht]
 \begin{center}

\psset{xunit=0.1\textwidth}
\psset{yunit=0.1\textwidth}
\begin{pspicture}(0,0.5)(10,3.5)

	
	\rput{90}(5,1){
		\psframe[fillstyle=solid, opacity=0.2, fillcolor=gray](-0.3,-0.3)(0.3,2)
		\rput{*0}(0,2.3){$G_1$}
	}
	\rput{0}(5,1){
		\psframe[fillstyle=solid, opacity=0.2, fillcolor=gray](-0.3,-0.3)(0.3,2)
		\rput{*0}(0,2.3){$G_2$}
	}
	\rput{-90}(5,1){
		\psframe[fillstyle=solid, opacity=0.2, fillcolor=gray](-0.3,-0.3)(0.3,2)
		\rput{*0}(0,2.3){$G_3$}
	}
	\rput(5,1){$S$}
\end{pspicture}
 \caption{A graph $G= \cup_{i=1}^{3} G_{i}$ where $G_{i} \cap G_{j} = S$ for all $i,j \in [3]$ and $i \neq j$}
  \label{amalgamGraphs}
 \end{center}
 \end{figure}
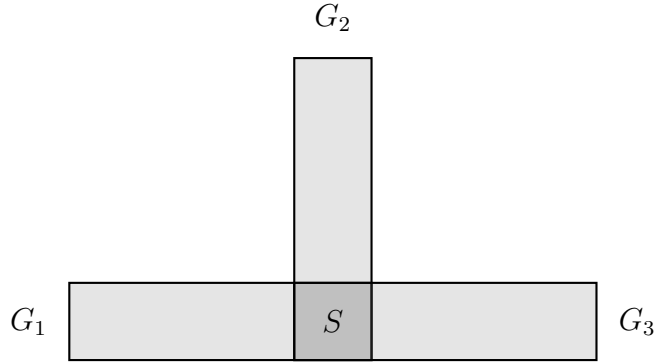

\end{lemma}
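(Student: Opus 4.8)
The plan is to build the desired encoding of $G$ by concatenating two blocks of coordinates: a first block of $m:=\max\{l_1,l_2,l_3\}$ coordinates carrying mutually compatible extensions of the given encodings $\phi_{G_i^{'}}$, and a second block of $\max\{q',l_s\}$ coordinates carrying $\phi_S$ together with a Latin-square encoding of the colour classes of $G\setminus S$; here $q=\chi(G\setminus S)$ and $q'=q$ except when $q\in\{2,6\}$, in which case $q'=q+1$, so that the total length is exactly the stated $l$.

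For the first block I would use Observation~\ref{extensionOfEncoding} to extend every $\phi_{G_i^{'}}$ to $m$ coordinates. Since $S$ is a clique in each $G_i^{'}$, every coordinate of $\phi_{G_i^{'}}$ assigns pairwise distinct values to the vertices of $S$; hence, exactly as in the renaming step in the proof of Lemma~\ref{lem2}, I may relabel the alphabets coordinate by coordinate so that afterwards $\phi_{G_1^{'}},\phi_{G_2^{'}},\phi_{G_3^{'}}$ all restrict to one common $m$-encoding $\psi$ of $S$ (necessarily an encoding of the complete graph on $V(S)$). The first block of $\phi_G(x)$ is then defined to be $\phi_{G_i^{'}}(x)$ for $x\in V(G_i)\setminus V(S)$ and $\psi(x)$ for $x\in V(S)$; this is well defined because the sets $V(G_i)\setminus V(S)$ partition $V(G)\setminus V(S)$. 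Because the extra edges of $G_i^{'}$ lie inside $V(S)$, this block already separates in all of its coordinates every adjacent pair lying in a common $G_i$ and every adjacent $S$--$G_i$ pair, makes the corresponding non-adjacent pairs agree in some coordinate, and distinguishes any two vertices lying in a common $G_i$. The only things it leaves open are that non-adjacent pairs inside $S$ must still be made to agree somewhere (the first block wrongly separates every pair of $S$ in every coordinate), and that every pair $x\in V(G_i)\setminus V(S)$, $y\in V(G_j)\setminus V(S)$ with $i\neq j$ must be made to agree somewhere (such a pair is never adjacent, since $G_i\cap G_j=S$).

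The second block is tailored to those two remaining requirements. Fix a proper $q$-colouring $c$ of $G\setminus S$. On this block the vertices of $S$ receive $\phi_S$, extended if necessary to $\max\{q',l_s\}$ coordinates, while each $x\in V(G_i)\setminus V(S)$ receives a tuple depending only on $i$ and on $c(x)$ and whose entries come from an alphabet disjoint from the one used by $\phi_S$. The disjoint alphabets make every $S$--$(G_i\setminus S)$ pair differ in every coordinate of this block, which is harmless (the adjacent such pairs must differ everywhere, and the non-adjacent ones are already settled by the first block), while $\phi_S$ handles the pairs inside $S$ according to the adjacency of $S$. For the tuples attached to $V(G_1)\setminus V(S)$, $V(G_2)\setminus V(S)$, $V(G_3)\setminus V(S)$ I would invoke the construction --- going back to the idea of Evans, Isaak and Narayan recalled in the introduction --- of an encoding on $q'$ coordinates of the disjoint union of three copies of $K_{q'}$: within one copy the $q'$ tuples pairwise differ in all coordinates (so two adjacent vertices of the same $G_i$, which receive distinct colours, are separated throughout this block as well), while two tuples from different copies agree in at least one coordinate (so every cross-part pair is resolved). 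Such an encoding exists precisely when a pair of orthogonal Latin squares of order $q'$ exists, and, as recalled in the introduction, such a pair exists for every order $q'\ge 3$ with $q'\neq 6$; when $q=\chi(G\setminus S)\in\{2,6\}$ no pair of that order is available, so I pass to $q'=q+1\in\{3,7\}$ and use the encoding of three disjoint copies of $K_{q'}$ restricted to the $q$ colours actually used by $c$. Concatenating the two blocks produces an $l$-encoding of $G$ with $l$ as claimed; running once through the five pair types --- two vertices in a common $G_i$, an $S$--$G_i$ pair, a same-part pair, a cross-part pair, and a pair inside $S$ --- checks both the adjacency condition and injectivity.

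I expect the Latin-square step in the second block to be the main obstacle: what is needed there is not merely a proper colouring of the colour classes of $G\setminus S$ but an encoding on only $q'$ coordinates of three disjoint cliques that in addition forces every cross-part pair to collide, and it is precisely the requirement of a pair of orthogonal Latin squares of order $q'$ --- unavailable for $q'\in\{2,6\}$ --- that accounts for the exceptional clause in the statement. Everything else --- the disjoint-alphabet device that lets $\phi_S$ and the Latin-square tuples coexist on the second block without interference, and the pair-type case analysis --- is routine bookkeeping.
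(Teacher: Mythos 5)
Your proposal reconstructs the paper's proof essentially verbatim: the same two-block concatenation (extended $\phi_{G_i^{'}}$'s renamed to agree on the clique $S$, followed by $\phi_S$ on $S$ and a Latin-square encoding of $3K_{q'}$ indexed by part and colour on $V(G)\setminus V(S)$), the same disjoint-alphabet device to let $\phi_S$ coexist with the Latin-square tuples, and the same bump from $q$ to $q+1$ when $\chi(G\setminus S)\in\{2,6\}$ because orthogonal Latin squares of those orders do not exist. The pair-type case analysis you sketch matches the paper's verification.
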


\begin{proof}
Without loss of generality we can assume that the alphabets used in $\phi_{S}$ are disjoint from the alphabets used in $\phi_{G_{i}^{'}}$ for all $i \in [3]$ and greater than $\chi(G)$, and also from Observation \ref{extensionOfEncoding}, let $l_{1} = l_{2} = l_{3} = \max\{l_{1}, l_{2}, l_{3}\}$. Let $V(G)=\{v_{0}, \ldots , v_{n-1}\}$, $f: V(G) \rightarrow \{0, \ldots , n-1\}$ be a bijection, and $f_{i}=f(v_{i})$. Let us rename the alphabets in each coordinate of $\phi_{G_{i}^{'}}$ such that all $v_{j} \in V(S)$ get the encoding as $(f_{j}, \ldots , f_{j})$ for all $i \in [3]$.

Let $c : V(G \setminus S) \rightarrow \{0, \ldots, \chi(G \setminus S) - 1\}$ be an optimal proper coloring of the vertices $V(G \setminus S)$.

 
Let 
\begin{eqnarray}
 t= \begin{cases}
     \chi(G \setminus S) + 1  & \textnormal{if $\chi(G \setminus S) =2$ or $6$} \\
     \chi(G \setminus S)  & \textnormal{otherwise}
    \end{cases}
\end{eqnarray}

By Theorem 4.3 in \cite{evans2000representations}, if we have two orthogonal Latin squares of order $t$, we can have a $t$-encoding for $3K_{t}$ and hence, for $3 K_{\chi(G \setminus S)}$ as well. Let the $j$-th vertex in the $i$-th copy of $3 K_{\chi(G \setminus S)}$ get the encoding $\phi_{K}(i,j)$ for all $i \in [3]$ and $j \in [\chi(G \setminus S)]$. Note that  $\phi_{K}(i,j)$ and $\phi_{K}(i,j^{'})$, $j \neq j^{'}$ disagree at all coordinates and $\phi_{K}(i,j)$ and $\phi_{K}(i^{'},j^{'})$, $i \neq i^{'}$, agree in at least one coordinate, for all $i, i^{'} \in [3]$ and $j,j^{'} \in [\chi(G \setminus S)]$. 
Let $m = \max\{t, l_{s}\}$. From Observation \ref{extensionOfEncoding}, let $\phi_{S}$ and $\phi_{K}(i,j)$ be $m$-encodings of $S$ and $3K_{\chi(G \setminus S)}$ respectively.
We construct an $l$-encoding of $G$, $\phi_{G}$, is as follows.
%
%
%

\begin{equation} \label{encodeAmal}
 \phi _{G}(x) = \begin{cases} 
		\phi_{G_{i}^{'}}(x)  \phi_{K}(i,c(x))  & \textnormal{if $x \in G_{i} \setminus S$} \\
		\phi_{G_{1}^{'}}(x) \phi_{S}(x)                            & \textnormal{$x \in S$}
		\end{cases}
\end{equation} 
We can verify that $\phi_{G}$ is a valid encoding of $G$ from the following argument.
Let $x,y \in V(G_{i} \setminus S)$. If $\{x,y\} \not \in E(G)$, then $\{x,y\} \not \in E(G_{i}^{'})$. Therefore, $\phi_{G_{i}^{'}}(x)$ and $\phi_{G_{i}^{'}}(y)$ agree in some coordinate, say $g$ and thus, $\phi_{G}(x)$ and $\phi_{G}(y)$ also agree in the $g$-th coordinate. If $\{x,y\} \in E(G)$, then $\{x,y\} \in E(G_{i}^{'})$. Hence, $\phi_{G_{i}^{'}}(x)$ and $\phi_{G_{i}^{'}}(y)$ do not agree in any coordinate and since, $c$ is a proper coloring of $G \setminus S$, $c(x) \neq c(y)$. Thus, $\phi_{K}(i, c(x))$ and $\phi_{K}(i, c(y))$ do not agree in any coordinate. Therefore, $\phi_{G}(x)$ and $\phi_{G}(y)$ do not agree in any coordinate.
 
Let $x \in V(G_{i} \setminus S)$ and $y \in V(G_{i^{'}} \setminus S)$, $i\neq i^{'}$. Note that $\{x,y\} \not \in E(G)$. Since $\phi_{K}(i,c(x))$ and $\phi_{K}(i^{'}, c(y))$, $i \neq i^{'}$ agree in some coordinate, say $g$, $\phi_{G}(x)$ and $\phi_{G}(y)$ will agree in the $(l_{1} + g)$-th coordinate.

For any $i$, let $x \in V(G_{i} \setminus S)$ and $y \in V(S)$. Since $\phi_{S}(y)$ uses new alphabets greater than $\chi_{G}$, $\phi_{G}(x)$ and $\phi_{G}(y)$ agree in some coordinate if and only if $\phi_{G_{i}^{'}}(x)$ and $\phi_{G_{1}^{'}} (y)$ ($= \phi_{G_{i}^{'}}(y)$) agree in some coordinate. 

For $x,y \in V(S)$, if $\{x,y\} \in E(G)$, then since $\phi_{G_{1}^{'}}(x)=(f(x), \ldots, f(x))$, $\phi_{G_{1}^{'}}(y)=(f(y), \ldots, f(y))$ where $f$ is a bijection and $\phi_{S}(x)$ and $\phi_{S}(y)$ disagree in all coordinates, $\phi_{G}(x)$ and $\phi_{G}(y)$ disagree in all coordinates. If $\{x,y\} \not \in E(G)$, then $\{x,y\} \not \in E(S)$, Thus, $\phi_{S}(x)$ and $\phi_{S}(y)$ agree in some coordinate, say $g$ and therefore, $\phi_{G}(x)$ and $\phi_{G}(y)$ agree in the $(l_{1} + g)$-th coordinate. 
\end{proof}

\begin{theorem} \label{thmTreeWidth}
For any graph $G$ on $n$ vertices and $tw(G)=t$, $pdim(G) \leq (t+2)(\log n + 1)$.
\end{theorem}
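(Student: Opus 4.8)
The plan is to run a divide-and-conquer argument parallel to the proof of Theorem~\ref{prodDimTrees}, with Lemma~\ref{splitDegen} (Splitting Lemma for Bounded Treewidth Graphs) as the divide step and Lemma~\ref{amalgamDegen} (Amalgamation Lemma for General Graphs) as the conquer step. Given $G$ with $tw(G)\le t$, I would fix a normalized tree decomposition of $G$ of optimal width (Lemma~3 of~\cite{chandran2007boxicity}) and apply Lemma~\ref{splitDegen} to obtain a bag $X_l$, $|X_l|\le t+1$, with $G\setminus X_l=P_1\uplus P_2\uplus P_3$ and $|P_i|\le\tfrac12(n-|X_l|+1)$. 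For each $i$ put $H_i:=G[\,V(P_i)\cup V(X_l)\,]$ and let $G_i^{*}$ be $H_i$ with all edges added on $V(X_l)$. Then $G=H_1\cup H_2\cup H_3$ with $H_i\cap H_j=G[X_l]=:S$ and no edge joining distinct $V(P_i)$ and $V(P_j)$ (the separator property of a tree decomposition, conditions~2 and~3), so $H_1,H_2,H_3$, $S$ and $G_1^{*},G_2^{*},G_3^{*}$ are exactly the graphs $G_1,G_2,G_3$, $S$ and $G_1',G_2',G_3'$ of Lemma~\ref{amalgamDegen}. I would recursively encode each $G_i^{*}$, take a short encoding of $S$, and invoke Lemma~\ref{amalgamDegen} to build an encoding of $G$.

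Three quantities must then be tracked. (i) Treewidth: restricting the tree decomposition of $G$ to the $P_i$-side of node $l$ while keeping $X_l$ is a tree decomposition of $H_i$ in which every edge added on $X_l$ lies inside the bag $X_l$ (of size $\le t+1$), so $tw(G_i^{*})\le t$ and the class is preserved; Lemma~3 of~\cite{chandran2007boxicity} then supplies a normalized decomposition for the next call. (ii) Cost per amalgamation: Lemma~\ref{amalgamDegen} adds at most $\max\{\chi(G\setminus S)+1,\,l_s\}$ coordinates, where $G$ and $S$ now denote the current recursive instance and its split bag; since $G\setminus S$ has treewidth $\le t$ it is $(t+1)$-colorable, so $\chi(G\setminus S)+1\le t+2$ (the correction term $+1$ is present only when $\chi(G\setminus S)\in\{2,6\}$, i.e.\ when $t\ge 1$ or $t\ge 5$, so $t+2$ still dominates), and $S$ has $\le t+1$ vertices, so $l_s=pdim(S)\le t+2$; hence each step costs at most $t+2$. (iii) Vertex count: as $|P_i|$ is an integer, $|G_i^{*}|=|P_i|+|X_l|\le\lceil\tfrac12(n+|X_l|)\rceil\le\lceil\tfrac12(n+t+1)\rceil$.

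So I would solve the recursion sending an instance on $s$ vertices to instances on $\le\lceil\tfrac12(s+t+1)\rceil$ vertices at additive cost $t+2$. The substitution $b:=s-(t+1)$ turns one step into $b\mapsto\lceil b/2\rceil$, which genuinely halves, so from $b_0=n-(t+1)$ we get $b_j\le\lceil(n-t-1)/2^{j}\rceil$; for $j=\lceil\log n\rceil-1$ we have $2^{j}\ge n/2$, hence $s_j\le(t+1)+2=t+3$. A graph on at most $t+3$ vertices has product dimension at most $t+2$ (the elementary bound $pdim(H)\le|V(H)|-1$ for $|V(H)|\ge 3$, and $pdim(H)\le 2$ for $|V(H)|\le 2$). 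Therefore the whole process uses at most $(t+2)(\lceil\log n\rceil-1)+(t+2)=(t+2)\lceil\log n\rceil\le(t+2)(\log n+1)$ coordinates; instances already having $n\le t+3$ are dispatched by the same elementary bound, which gives $pdim(G)\le t+2\le(t+2)(\log n+1)$.

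The conceptual weight is all carried by the two lemmas, so the hard part will be the bookkeeping of the last step. Because the separator $X_l$ (up to $t+1$ vertices) must be re-encoded inside each of the three recursive pieces, the vertex count does not simply halve, and a direct induction on the target bound $(t+2)(\log n+1)$ does not close; one is forced to use the shift $b=s-(t+1)$ and to stop after exactly $\lceil\log n\rceil-1$ rounds at a base case on $\Theta(t)$ vertices, being careful with the floors and ceilings produced by Lemma~\ref{splitDegen}. The remaining points requiring attention are that completing $X_l$ to a clique keeps the treewidth at most $t$, and that $\chi(G\setminus S)$ --- which governs the size of the $3K_{\chi(G\setminus S)}$ construction built from orthogonal Latin squares inside Lemma~\ref{amalgamDegen} --- never exceeds $t+1$.
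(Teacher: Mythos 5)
Your argument is correct and is essentially the paper's proof: same divide step (Lemma~\ref{splitDegen}), same conquer step (Lemma~\ref{amalgamDegen}), same per-step cost bound of $t+2$ via $\chi(G\setminus S)\le t+1$ and $l_s\le t+2$, and same base case on $t+3$ vertices. The only difference is bookkeeping --- you track the recurrence through the shifted quantity $b=s-(t+1)$, whereas the paper sums the geometric series $\alpha^d n+\beta/(1-\alpha)$ directly --- and both routes give $(t+2)(\log n+1)$.
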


\begin{proof}
We use a divide and conquer strategy to prove the theorem. Let $C_{t}(n)=\max \{ pdim(G) : G$ is a graph on at most $n$ vertices and $tw(G) \leq t \}$.

Base Case: By Theorem 4.3 in \cite{lovasz1980product}, $C_{t}(t+3) = t+2$.

Divide and Conquer: In our divide and conquer strategy, the divide operation corresponds to the splitting operation of Lemma \ref{splitDegen} and the conquer operation corresponds to the amalgamation operation of Lemma \ref{amalgamDegen}.

By Lemma \ref{splitDegen}, for a graph $G$ on $n$ vertices with $tw(G)=t$ and a normalized tree decomposition $(\{X_{i} : i \in I\}, r \in I, T)$ of width  $t$, there exists  $l \in I$ such that $G \setminus X_{l} = G_{1} \uplus G_{2} \uplus G_{3}$,  $|G_{i}| \leq \frac{1}{2}(n-|X_{l}| + 1)$, $i \in [3]$. Let $G_{i}^{'}=G_{i} \cup G[X_{l}]$ for all $i \in [3]$. Therefore, $|G_{i}^{'}| \leq \frac{1}{2}(n-|X_{l}|+1) + |X_{l}|=\frac{1}{2}(n+|X_{l}|+1)$ for all $i \in [3]$. Let $\alpha = \frac{1}{2}$ and $\beta = \frac{|X_{l}|+1}{2}$. Hence, $|G_{i}^{'}| \leq \alpha n + \beta$ for all $i \in [3]$.

Let $S = G[X_{l}]$.Note that $G_{i}^{'} \cap G_{j}^{'} =S$ for all $i,j \in [3]$ and $i \neq j$, and  $G=G_{1}^{'} \cup G_{2}^{'} \cup G_{3}^{'}$. Let $G_{1}^{''}, G_{2}^{''}, G_{3}^{''}$ be graphs such that $V(G_{i}^{''}) = V(G_{i}^{'})$ and $E(G_{i}^{''})=E(G_{i}^{'}) \cup \{\{v, v^{'}\}: v, v^{'} \in V(S)\}$ for all $i \in [3]$ (note that $|G_{i}^{''}| \leq \alpha n + \beta,$ $i \in [3]$). Let $\phi_{G_{i}^{''}}$ is an $l_{i}$-encoding of $G_{i}^{''}$ for all $i \in[3]$ and $\phi_{S}$ be an $l_{s}$-encoding of $S$. Then, by Lemma \ref{amalgamDegen}, we can construct an $l$-encoding of $G$ where 
\begin{eqnarray}
 l = \begin{cases}
      \max\{l_{1}, l_{2}, l_{3}\} + \max\{\chi(G \setminus S) + 1 , l_{s}\} & \textnormal{if $\chi(G \setminus S) =2$ or $6$} \\
      \max\{l_{1}, l_{2}, l_{3}\} + \max\{\chi(G \setminus S), l_{s}\} & \textnormal{otherwise}.
     \end{cases}
\end{eqnarray}
 Since $G$ is a graph with $tw(G)=t$, $\chi(G) \leq t+1$(Theorem 6, \cite{lu2007note}), and hence $\chi(G \setminus S) \leq t+1$. Also, since $|V(S)| \leq t+1$, by Theorem 4.3 \cite{lovasz1980product}, $l_{s} \leq t+1$.
Therefore,

\begin{eqnarray}
 l \leq \begin{cases}
      \max\{l_{1}, l_{2}, l_{3}\} + \max\{t+2, t+1\} & \textnormal{if $\chi(G \setminus S) =2$ or $6$} \nonumber \\
      \max\{l_{1}, l_{2}, l_{3}\} + \max\{t+1, t+1\} & \textnormal{otherwise} \nonumber \\
     \end{cases}
\end{eqnarray}
Hence,
\begin{eqnarray}     
 l \leq \max\{l_{1}, l_{2}, l_{3}\} + t+2
\end{eqnarray}
  
Let $G^{'}$ be the graph such that $V(G^{'}) = V(G)$ and $E(G^{'}) = E(G) \cup \{\{v, v^{'}\} : v , v^{'} \in V(S)\}$. Note that $(\{X_{i} :  i \in I\}, r \in I, T)$ is a tree decomposition for $G^{'}$ too since all the new edges added are between the vertices of the same node. Also since $G \subset G^{'}$, $tw(G) \leq tw(G^{'})$. Hence, $tw(G^{'}) = t$ and thus, $tw(G_{i}^{''}) \leq t$ (since $G_{i}^{''} \subset G^{'}$) for all $i \in [3]$.  
  
Therefore, the following recurrence relation holds.
\begin{eqnarray}
C_{t}(n) &\leq& 
    C_{t}(\alpha n + \beta) + t+2 \nonumber \\ 
C_{t}(t+3) &\leq& t+2 
\end{eqnarray}
Solving the recurrence: Let $X$ be an arbitrary leaf in the recurrence tree and let $P$ denote the path from root to $X$. Let the number of divide operations along $P$ be $d$. Let $s_{j}$ be the size of the subgraph of $G$ to be conquered along $P$ after $j$ steps. 

Therefore, $s_{d} \leq \alpha^{d}n + \alpha^{d-1} \beta + \alpha^{d-2} \beta + \cdots + \alpha \beta + \beta \leq \alpha^{d}n + \frac{\beta}{1-\alpha} = \alpha^{d}n +|X_{l}|+1 \leq \alpha^{d}n +t+2$ (since $|X_{l}| \leq t+1$). Hence, $s_{d} \leq \lfloor \alpha^{d}n + t+2 \rfloor$. Note that the total cost of conquering incurred along $P$ is $(t+2)d$. 


Let $d \geq \log n$. Then $s_{d} \leq t+3$. Hence, $C_{t}(n) \leq (t+2)\log n + C_{t}(t+3) \leq (t+2) \log n + t+2 = (t+2)(\log n +1)$. Therefore, $pdim(G) \leq (t+2)(\log n + 1)$.
\end{proof}
\section{Product Dimension of $k$-degenerate Graphs}
Let $v_{1}, \ldots, v_{n}$ be an ordering of the vertex set of $G$ such that $|N(v_{i}) \cap \{v_{j} : j< i\}| \leq k$. If for a graph $G$ such an ordering exists, then the graph $G$ is called $k$-degenerate and the set $N_{G}(v_{i}) \cap \{v_{j} : j< i\}$ is called the set of backward neighbors of $v_{i}$.

 \begin{theorem} \label{thmDegenerate}
 For every $k$-degenerate graph $G$, $pdim(G) \leq \lceil 8.317 k \log n \rceil + 1$.
 \end{theorem}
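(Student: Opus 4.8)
The plan is to use a probabilistic argument based on random colorings, in the same spirit as Eaton and Rödl but more carefully tuned. Recall that $pdim(G) \le eq(\bar G) + 1$, where $eq(\bar G)$ is the minimum number of equivalence graphs (disjoint unions of cliques) needed to cover the edges of $\bar G$. Equivalently, we seek a family of proper colorings $c_1, \ldots, c_\ell$ of $G$ such that every non-edge $\{u,v\}$ of $G$ is \emph{resolved}, i.e.\ there is some $i$ with $c_i(u) = c_i(v)$; the product dimension is then at most $\ell + 1$ (using one extra coordinate carrying a distinct label per vertex to guarantee injectivity, as in Figure~\ref{Encodings}). So it suffices to produce $\lceil 8.317 k \log n \rceil$ proper colorings that jointly resolve all non-edges.

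First I would fix the degeneracy ordering $v_1, \ldots, v_n$ with each $v_i$ having at most $k$ backward neighbors. To build a single random proper coloring, I would process the vertices in this order and assign to $v_i$ a color chosen uniformly at random from a palette of size $q$ (to be optimized), conditioned only on avoiding the at most $k$ colors already used on its backward neighbors; since $q > k$, at least $q - k$ choices remain, and this yields a proper coloring of $G$ in which, for any fixed non-edge $\{v_i, v_j\}$ with $i > j$, the probability that $c(v_i) = c(v_j)$ is at least $1/(q-k) \cdot$ (something), and more carefully at least roughly $1/q$ after accounting for the conditioning — the key point is to get a clean lower bound $p \ge \frac{1}{q}\bigl(1 - \frac{k}{q}\bigr)$ or similar on the probability that a given non-edge is resolved by one coloring. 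Then, taking $\ell$ independent such colorings, the probability that a fixed non-edge remains unresolved is at most $(1-p)^\ell$, and by a union bound over at most $\binom{n}{2} < n^2/2$ non-edges, the whole family fails with probability at most $\frac{n^2}{2}(1-p)^\ell < 1$ as soon as $\ell > \frac{2\ln n}{-\ln(1-p)}$. Optimizing $q$ (one expects the optimum near $q = k \cdot e$ or thereabouts, where the factor $8.317$ comes from — note $8.317 \approx 2/\ln(1+\text{something})$ after choosing the palette size to make $-\ln(1-p) \approx (\ln 2)/(4.1585 k)$), I would conclude that $\ell = \lceil 8.317 k \log n \rceil$ colorings suffice.

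The main obstacle — and the place where the constant $8.317$ is actually earned — is getting a sufficiently strong lower bound on $p$, the per-coloring probability that a given non-edge is resolved, while the vertex colors are \emph{not} independent because of the properness conditioning. The honest way is: condition on everything except the color of $v_i$; the color of $v_j$ is already determined and is proper at $v_j$; $v_i$'s color is uniform on a set of size between $q-k$ and $q$ that is guaranteed to contain all colors except those of $v_i$'s backward neighbors. If $c(v_j)$ happens not to be among those forbidden colors (which one can arrange to be the typical case, or handle by noting $v_j$'s color is at worst one of $q$ equally likely values and $v_i$ avoids at most $k$ of them), then $\Pr[c(v_i)=c(v_j)\mid \text{rest}] \ge \frac{1}{q}\cdot\frac{q-k}{q}$ — but one must be careful to also handle the case $c(v_j) \in N_{\text{back}}(v_i)$'s color set correctly, or simply to choose the palette larger and bound $p \ge \frac{q-k}{q^2}$ unconditionally by a slightly different coupling. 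I would then optimize the resulting bound $\ell \ge \frac{2\ln n}{-\ln(1-(q-k)/q^2)}$ over real $q > k$; setting $q = \lambda k$ and letting $k$ be large, $-\ln(1 - \frac{\lambda-1}{\lambda^2 k}) \approx \frac{\lambda-1}{\lambda^2 k}$, which is maximized over $\lambda$ at $\lambda = 2$, giving $\ell \approx 8 k \ln n / \ln 2$ — and the slightly worse $8.317$ absorbs the lower-order corrections from the $\ln(1-x) \ge -x - x^2$ type estimates and the $+1$ in $\binom n 2$ versus $n^2$. The final bookkeeping — converting $\ell$ equivalences into an $(\ell+1)$-encoding via one injectivity coordinate — gives $pdim(G) \le \lceil 8.317 k \log n\rceil + 1$.
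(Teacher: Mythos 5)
Your approach is essentially the same as the paper's: build random proper colorings greedily along a degeneracy ordering (each vertex drawing its color uniformly from the palette minus the colors of its at most $k$ backward neighbors), lower-bound the per-coloring probability that a fixed non-edge $\{v_i,v_j\}$ collides, union-bound over all non-edges, and finish with one extra coordinate for injectivity. The paper simply fixes the palette at $3k$ colors and bounds the collision probability below by $1/(6k)$, yielding $\lceil 12 k \ln n\rceil = \lceil 8.317 k \log n\rceil$ colorings; your sketch leaves the palette size as a free parameter and is a bit loose in the final arithmetic, but the plan and the union-bound bookkeeping are the ones the paper uses.
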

 \begin{proof}
Recall that the product dimension of a graph $G$ is the minimum number of proper colorings of $G$ such that any pair of non-adjacent vertices get the same color in at least one of the colorings and not in all of them.
 
We use probabilistic arguments to prove the theorem. Let us describe a random coloring procedure using $3k$ colors for the vertices of $G$. Let $C=[3k]$ be the set of colors. We color the vertices starting from $v_{1}$ such that any vertex $v_{i}$ is assigned a color uniformly at random from the set $C \setminus C_{i}$, where $C_{i}$ is the set of colors used by the backward neighbors of $v_{i}$. Note that $0 \leq |C_{i}| \leq k$.
Repeat this procedure $p$ times to get $p$ random colorings. This procedure ensures that colorings are proper.

For $\{v_{i}, v_{j}\} \not \in E(G)$, let us calculate the probability that both $v_{i}$ and $v_{j}$ get the same color in a particular coloring. Let $C^{'}=C \setminus (C_{i} \cup C_{j})$. Then the probability that both $v_{i}$ and $v_{j}$ get the same color in a particular coloring is equal to the probability that $v_{i}$ chooses a color from the set $C^{'}$ and $v_{j}$ chooses the same color as chosen by $v_{i}$ from the set $C^{'}$. Hence,the probability that $v_{i}$ and $v_{j}$ get the same color in a particular coloring $= \frac{|C^{'}|}{|C| - |C_{i}|} \frac{|C^{'}|}{|C| -|C_{j}|} \frac{1}{|C^{'}|} \geq  \frac{|C^{'}|}{|C^{'}| + |C_{j}|} \frac{|C^{'}|}{|C^{'}| +|C_{i}|} \frac{1}{|C^{'}|} (\because |C^{'}|\geq|C|-|C_{i}|-|C_{j}|)$. Note that $0 \leq |C_{i}| \leq k \leq |C^{'}|$ hence $|C_{i}| \leq |C^{'}|$ and $|C_{j}| \leq |C^{'}|$. Therefore, $\frac{|C^{'}|}{|C^{'}|+|C_{i}|} , \frac{C^{'}|}{|C^{'}|+|C_{j}|} \geq \frac{1}{2}$. Also, $\frac{1}{|C^{'}|} \geq \frac{1}{|C|} = \frac{1}{3k}$ ( since $|C^{'}| \leq |C|$). Thus, the probability that $v_{i}$ and $v_{j}$ get the same color in a particular coloring $ \geq \frac{1}{6k}$. The probability that $v_{i}$ and $v_{j}$ get different colors in a particular coloring $ \leq (1-\frac{1}{6k})$. Therefore, the probability that $v_{i}$ and $v_{j}$ get different colors in all the $p$ colorings $ \leq (1-\frac{1}{6k})^{p} \leq e^{\frac{-p}{6k}}$. Hence, the probability that all pairs of non-adjacent vertices get different colors in all the $p$ colorings $ < n^{2}e^{\frac{-p}{6k} }$. If $p \geq 12k \ln n = 8.317 k \log n$, $n^{2} e^{\frac{-p}{6k}} \leq 1$. Thus, if $p = \lceil 8.317 k \log n \rceil$, then every pair of non-adjacent vertices in the graph gets the same color in at least one of the $p$ colorings described above. There might exist a case when a pair of non-adjacent vertices get the same color in all the colorings in which case we also consider a $(p+1)$-th coloring where all vertices get a unique color. Thus $pdim(G) \leq \lceil 8.317 k \log n \rceil + 1$.
\end{proof}

\bibliographystyle{plain}

\begin{thebibliography}{10}

\bibitem{alles1986dimension}
P.~Alles.
\newblock Dimension of amalgamated graphs and trees.
\newblock {\em Czechoslovak Mathematical Journal}, 36(3):393--416, 1986.

\bibitem{alon1986covering}
N.~Alon.
\newblock Covering graphs by the minimum number of equivalence relations.
\newblock {\em Combinatorica}, 6(3):201--206, 1986.

\bibitem{bose1960further}
R.C. Bose, S.S. Shrikhande, and E.T. Parker.
\newblock Further results on the construction of mutually orthogonal {Latin}
  squares and the falsity of {Euler's} conjecture.
\newblock {\em Canad. J. Math}, 12:189--203, 1960.

\bibitem{chandran2007boxicity}
L.S. Chandran and N.~Sivadasan.
\newblock Boxicity and treewidth.
\newblock {\em Journal of Combinatorial Theory, Series B}, 97(5):733--744,
  2007.

\bibitem{cooper10}
Jeffery~R. Cooper.
\newblock Product dimension of a random graph.
\newblock Masters thesis, Miami University, Oxford, Ohio, 2010.

\bibitem{eaton1996graphs}
N.~Eaton and V.~R{\"o}dl.
\newblock Graphs of small dimensions.
\newblock {\em Combinatorica}, 16(1):59--85, 1996.

\bibitem{erdos1989representations}
P.~Erd{\"o}s and A.B. Evans.
\newblock Representations of graphs and orthogonal {Latin} square graphs.
\newblock {\em Journal of Graph Theory}, 13(5):593--595, 1989.

\bibitem{evans2007representations}
A.B. Evans.
\newblock Representations of disjoint unions of complete graphs.
\newblock {\em Discrete mathematics}, 307(9):1191--1198, 2007.

\bibitem{evans2000representations}
A.B. Evans, G.~Isaak, and D.A. Narayan.
\newblock Representations of graphs modulo n.
\newblock {\em Discrete Mathematics}, 223(1):109--123, 2000.

\bibitem{furedi2000prague}
Z.~F\"{u}redi.
\newblock On the {Prague} dimension of {Kneser} graphs.
\newblock {\em Numbers, information, and complexity}, page 125, 2000.

\bibitem{kantor10}
Ida Kantor.
\newblock Graphs, codes, and colorings.
\newblock {Ph.D} thesis, Graduate College of the University of Illinois at
  Urbana-Champaign, 2010.

\bibitem{koster02}
A.M.C.A. Koster, H.L. Bodlaender, and S.P.M. Van~Hoesel.
\newblock Treewidth: computational experiments.
\newblock Research Memoranda 0001, Maastricht : METEOR, Maastricht Research
  School of Economics of Technology and Organization, 2001.

\bibitem{lovasz1980product}
L.~Lov{\'a}sz, J.~Ne\v{s}et\v{r}il, and A.~Pultr.
\newblock On a product dimension of graphs.
\newblock {\em Journal of Combinatorial Theory, Series B}, 29(1):47--67, 1980.

\bibitem{lu2007note}
C.~Lu.
\newblock A note on lower bounds of treewidth for graphs.
\newblock In {\em International Mathematical Forum}, volume~2, pages
  2893--2898, 2007.

\bibitem{nesetril1978simple}
J.~Ne\v{s}et\v{r}il and V.~R{\"o}dl.
\newblock A simple proof of the Galvin-Ramsey property of the class of all
  finite graphs and a dimension of a graph.
\newblock {\em Discrete Mathematics}, 23(1):49--55, 1978.

\bibitem{poljak1981dimension}
S.~Poljak and A.~Pultr.
\newblock On the dimension of trees.
\newblock {\em Discrete Mathematics}, 34(2):165--171, 1981.

\end{thebibliography}

\end{document}